\newtheorem{thm}{Theorem}[section]
\newtheorem{lemma}[thm]{Lemma}
\newtheorem{prop}[thm]{Proposition}
\newtheorem{cor}[thm]{Corollary}
\theoremstyle{definition}
\newtheorem{defn}[thm]{Definition}
\newtheorem{rem}[thm]{Remark}
\newtheorem{notn}[thm]{Notation}
\newtheorem{exam}[thm]{Example}
\newcommand{\cC}{ {\mathcal C} }
\newcommand{\cD}{ {\mathcal D} }
\newcommand{\Otilda}{ \widetilde{\Omega} }
\newcommand{\cS}{{\cal S}}
\newcommand{\bZ}{ {\mathbb Z} }
\newcommand{\otilda}{ \widetilde{\Omega} }
\newcommand{\ee}{\varepsilon}
\newcommand{\sncb}{ {\cal S}^{(B)}_{nc} }
\newcommand{\nca}{ NC^{(A)} }
\newcommand{\ncb}{ NC^{(B)} }
\newcommand{\ecdef}{ \stackrel{\mathrm{def}}{\Longleftrightarrow} }
\begin{document}

\title{\bf Enumerative properties of 
\boldmath{$\ncb (p,q)$} }
\author{I.P. Goulden
\thanks{Supported by a Discovery Grant from NSERC, Canada.}
\and Alexandru Nica$ \ {  }^{*}$
\and Ion Oancea}

\date{  }

\maketitle

\vspace{-.8cm}

\begin{abstract}
We determine the rank generating function, the zeta polynomial and 
the M\"obius function for 
the poset $NC^{(B)}(p,q)$ of annular non-crossing partitions of 
type $B$, where $p$ and $q$ are two positive integers. 
We give an alternative treatment of some of these results in the case 
$q=1$, for which this poset is a lattice. We also consider the 
general case of multiannular non-crossing
partitions of type $B$, and prove that this reduces to the cases of
non-crossing partitions of type $B$ in the annulus and the disc.
\end{abstract}

\noindent
{\bf AMS classification numbers:} 06A07, 05A15

\vspace{.1in}

\noindent
{\bf Keywords:} annular non-crossing partitions of type $B$, rank 
generating function, zeta polynomial, M\"obius function

\section{Introduction}  \label{section 1}
\setcounter{section}{1}

The enumerative properties of the lattice $NC(n)$ of non-crossing 
partitions of $\{ 1, \ldots , n \}$ have been studied since the 
early 1970's, starting with the paper \cite{K72} of G. Kreweras. 
An important feature of this lattice is its connection to the 
symmetric group $\cS_n$. More precisely, one has a natural poset 
isomorphism
\begin{equation}  \label{eqn:1.1}
NC (n) \simeq [\ee , \alpha_n ] := 
\{ \tau \in \cS_n \mid \ee \leq \tau \leq \alpha_n \} ,
\end{equation}
where ``$\leq$'' is a natural partial order on $\cS_n$, $\ee$ is 
the unit of $\cS_n$, and $\alpha_n$ is the 
long cycle $(1, \ldots , n )$ (see \cite{B97}, \cite{Br01}).

In 1997, V. Reiner \cite{R97} introduced the lattice $\ncb (n)$
of non-crossing partitions of type B. Soon after that (see
\cite{Be03}, \cite{BW02}, \cite{BGN03}) it was noticed that one 
has a poset isomorphism analogous to the one from (\ref{eqn:1.1}):
\begin{equation}  \label{eqn:1.2}
NC^{(B)} (n) \simeq [ \ee , \gamma_n ] 
:= \{ \tau \in B_n \mid \ee \leq \tau \leq \gamma_n \} ,
\end{equation}
where ``$\leq$'' is a natural partial order on the hyperoctahedral
group $B_n$, $\ee$ is the unit of $B_n$, and $\gamma_n$ is the long
cycle $(1, \ldots , n, -1, \ldots , -n)$. (Here $B_n$ is viewed as 
the group of permutations $\tau$ of 
$\{ 1, \ldots , n\} \cup \{ -1, \ldots , -n \}$
that satisfy the condition $\tau (-i) = - \tau (i)$, 
$1 \leq i \leq n$.)

The recent paper \cite{NO07} introduced a family of posets
$\ncb (p,q)$, where $p,q$ are two positive integers. One has a 
poset isomorphism
\begin{equation}  \label{eqn:1.3}
NC^{(B)} (p,q) \simeq [ \ee , \gamma_{p,q} ] \subseteq B_{p+q},
\end{equation}
where the partial order on the hyperoctahedral group $B_{p+q}$ is 
the same as in (\ref{eqn:1.2}), and where 
$\gamma_{p,q}$ is now the permutation with two cycles
\[
\gamma_{p,q} := ( \, 1, \ldots , p, -1, \ldots , -p \, )
( \, p+1, \ldots , p+q, -(p+1), \ldots , -(p+q) \, ) \in B_{p+q}.
\]
The elements of $NC^{(B)} (p,q)$ are certain partitions of the set 
$\{ 1, \ldots , p+q \} \cup \{ -1, \ldots , -(p+q) \}$, and the
partial order considered on $NC^{(B)} (p,q)$ is the one 
given by reverse refinement: $\pi \leq \rho$ if and only 
if every block of $\pi$ is contained in a block of $\rho$.
The distinctive feature of the partitions in $\ncb (p,q)$ is that 
one can draw them as non-crossing diagrams in an {\em annulus} with 
$2p$ points marked on its outside circle and $2q$ points marked 
on its inside circle. (This is unlike the diagrams drawn for 
partitions in $NC^{(B)} (n)$, which are drawn in a {\em disc} with 
$2n$ points marked on its boundary.) 
The poset $\ncb (p,q)$ isn't generally a lattice, but we have a
notable exception occurring in the case when $q=1$. In this case 
the meet operation coincides with the usual ``intersection meet'' 
for partitions -- the blocks of the meet 
$\pi \wedge \rho \in \ncb (p,1)$ are precisely the non-empty 
intersections $A \cap B$ where $A$ is a block of $\pi$ and $B$ is 
a block of $\rho$.

In the present paper we determine the rank generating function, the
zeta polynomial and 
the M\"obius function of the poset $\ncb (p,q)$. Here is how the 
paper is organized. In Section 2 we give a brief review of 
$\ncb (p,q)$ and of a few of its properties that are needed in the 
present paper. Then in Section 3 we discuss 
the special ``lattice'' case $q=1$, when 
the formulas for both the rank generating function and the 
M\"obius function are nicer, and have simpler derivations. It is
amusing to note that $\ncb (n-1, 1)$ has the same rank
generating function as $\ncb (n)$. Nevertheless, one has 
$\ncb (n-1,1) \not\simeq \ncb(n)$ for all $n \geq 3$, as one 
sees by looking at M\"obius functions. 

Section 4 is about the rank generating function of $\ncb (p,q)$
for general $p,q$. We observe that we still have nice formulas 
when we focus on partitions in $\ncb (p,q)$ that have a given 
connectivity (the {\em connectivity} of a partition 
$\pi \in \ncb (p,q)$ is the number of pairs of blocks $A,-A$
of $\pi$ such that $A \neq -A$ and such that $A$ intersects both
sets $\{ \pm 1, \ldots , \pm p \}$ and  
$\{ \pm (p+1), \ldots , \pm (p+q) \}$). But when we just enumerate
the partitions in $\ncb (p,q)$ by their rank we get 1-parameter
sums (which can be summed up to a ``closed form'' when $q=1$, but
not for general $q$). One nice fact that arises in our analysis,
stated as Theorem~\ref{thm:4.4}.3, is that the total
number of partitions in $\ncb (p,q)$ is given by
\begin{equation*}
\left| \ncb (p,q)\right| =\frac{p+q+pq}{p+q}\cdot{2p\choose p}{2q\choose q}.
\end{equation*}

Section 5 is devoted to determining 
the M\"obius function for $\ncb (p,q)$. The method
used here is to count multichains via suitable ``systems 
of parentheses'', on the same lines that were used by Edelman
\cite{E80} to count multichains in $NC(n)$ and then by 
Reiner \cite{R97} to count multichains in $\ncb (n)$. A benefit 
of this approach is that it also yields concrete formulas for the 
zeta polynomial for $\ncb (p,q)$, and for the number of
maximal chains in $\ncb (p,q)$. The formulas obtained are again 
not in closed form, but (again) they can be summed up to closed 
form in the particular case when $q=1$.

In the final Section 6 of the paper, we give a brief description
of the general case of multiannular non-crossing partitions
of type $B$. The main point of the section is to establish that,
due to a topological restriction called the genus inequality,
the general multiannular case reduces in fact to the cases
of non-crossing partitions of type $B$ in a disc or an annulus.

\section{Review of \boldmath{$\ncb (p,q)$} }
\setcounter{section}{2}
\setcounter{equation}{0} 

In this section we review, following \cite{NO07}, a few 
basic facts about the posets $\ncb (p,q)$. 
We will start with a set $\sncb (p,q)$ of ``annular non-crossing 
{\em permutations} of type B'', and we will then define 
$\ncb (p,q)$ in terms of $\sncb (p,q)$.

\begin{defn} \label{def:2.1}
{\em (Partial order on $B_{p+q}$ and the definition of $\sncb (p,q)$.)}

\noindent
We will introduce $\sncb (p,q)$ via a natural partial order on the 
hyperoctahedral group $B_{p+q}$. Let us denote for convenience 
$n:=p+q$. Recall that $B_n$ is the group of permutations $\tau$ of 
$\{ \pm 1, \ldots , \pm n \}$ that satisfy the condition 
$\tau (-i) = - \tau (i)$, $\forall \, 1 \leq i \leq n$. 

$1^o$ We consider the following (non-minimal) set of $n^2$ generators 
of $B_n$:
\[
\{ (i,j)(-i,-j) \mid 1 \leq i,j \leq n, \ i \neq j \}
\cup \{ (i,-j)(-i,j) \mid 1 \leq i,j \leq n, \ i \neq j \}
\]
\begin{equation}  \label{eqn:2.1}
\cup \,  \{ (i, -i) \mid 1 \leq i \leq n \} .
\end{equation}
The generators from (\ref{eqn:2.1}) define a {\em length function} 
$\ell_B$ on $B_n$, as follows: for every $\tau \in B_n$ the length 
$\ell_B ( \tau )$ is the smallest possible 
$k \geq 0$ such that $\tau$ can be factored as a product of $k$ 
generators (with the convention that the product of 0 generators 
is equal to the unit $\ee$ of $B_n$). 

$2^o$ The length function $\ell_B$ satisfies the triangle inequality
\begin{equation}\label{triangineq}
\ \ell_B ( \sigma )  \leq\ell_B ( \tau ) + \ell_B ( \tau^{-1} \sigma ),
\end{equation}
and using the case of equality, we define a
{\em partial order} on $B_n$, where for $\tau , \sigma \in B_n$ 
we put
\begin{equation}  \label{eqn:2.3}
\tau \leq \sigma \ \ecdef
\ \ell_B ( \sigma ) = \ell_B ( \tau ) + \ell_B ( \tau^{-1} \sigma ).
\end{equation}
In other words, the order relation $\tau \leq \sigma$ means that one 
can find minimal factorizations for $\tau$ and for 
$\tau^{-1} \sigma$ into products of generators, such that the
concatenation of these two factorizations gives a minimal 
factorization for $\sigma$.

$3^o$ We define 
\begin{equation}  \label{eqn:2.5}
\sncb (p,q) :=  \{ \tau \in B_n \mid \tau \leq \gamma_{p,q} \} ,
\end{equation} 
where the partial order considered in $B_n$ is the one defined above,
and where $\gamma_{p,q} \in B_n$ is the permutation with two cycles
\begin{equation}  \label{eqn:2.4}
\gamma_{p,q} := ( \, 1, \ldots , p, -1, \ldots , -p \, )
( \, p+1, \ldots , p+q, -(p+1), \ldots , -(p+q) \, ) \in B_n.
\end{equation}
Since the inequality $\ee \leq \tau$ holds for every $\tau \in B_n$,
we thus have that $\sncb (p,q)$ is the  
interval $[ \ee , \gamma_{p,q} ]$ in the group $B_n$.

We mention here that one can give several other equivalent
descriptions for $\sncb (p,q)$. Two such descriptions are 
discussed in \cite{NO07} -- one of them is in terms of a ``genus 
inequality'', and the other is in terms of ``annular 
crossing patterns'' (see Section 2.5 in \cite{NO07}). But these 
alternative descriptions will not be used in the present
paper (although the genus inequality will be used in Section~\ref{sec:6}
for other reasons).
\end{defn}

\begin{defn}    \label{def:2.2}
{\em (Orbit partitions and the definition of $\ncb (p,q)$.)}

\noindent
Let $p,q$ and $n := p+q$ be as above.

$1^o$ For every $\tau \in B_n$ we will use the notation 
$\Omega (\tau)$ for the partition of $\{ \pm 1, \ldots , \pm n \}$ 
into orbits of $\tau$. (Thus two numbers $a,b$ from 
$\{ \pm 1 , \ldots , \pm n \}$ belong to the same block of 
$\Omega ( \tau )$ if and only if there exists $m \in \bZ$ such 
that $\tau^m (a) =b$.) It is obvious that if $A$ is a block of 
$\Omega ( \tau )$ then $-A$ is a block of $\Omega ( \tau )$ as well. 
In the case when $A= -A$ we say that $A$ is {\em inversion-invariant}, 
or that it is a {\em zero-block} of $\Omega ( \tau )$. Clearly, the 
blocks of $\Omega ( \tau )$ that are not inversion-invariant come 
in pairs ($A$ and $-A$, with $A \neq -A$). 

$2^o$ For every $\tau \in B_n$ we will use the 
notation $\otilda ( \tau )$ for the partition of 
$\{ \pm 1, \ldots , \pm n \}$ which is obtained from 
$\Omega ( \tau )$ by grouping together all the inversion-invariant 
blocks of $\Omega ( \tau )$ (if such blocks exist) into one 
block of $\otilda ( \tau )$. That is: if
\[
\Omega ( \tau ) = 
\{ A_1, \ldots , A_k, B_1, -B_1, \ldots , B_l, -B_l \}
\]
with $A_i = - A_i$ for $1 \leq i \leq k$, then
\[
\otilda ( \tau ) = \{ A_1 \cup \cdots \cup A_k, B_1, -B_1, \ldots ,
B_l, -B_l \}.
\]

$3^o$ The set $\ncb (p,q)$ of 
{\em annular non-crossing partitions of type B} is defined as 
\begin{equation}  \label{eqn:2.6}
\ncb (p,q) := \{ \otilda ( \tau ) \mid \tau \in \sncb (p,q) \} .
\end{equation}
\end{defn}

\begin{rem}  \label{rem:2.3}

$1^o$ The set $\ncb (p,q)$ is defined in such a way that the map
$\otilda : \sncb (p,q) \to \ncb (p,q)$ is surjective. It is remarkable 
that this map is in fact a {\em poset isomorphism}, where 
$\sncb (p,q)$ is partially ordered as an interval of $B_{p+q}$ (and 
where $B_{p+q}$ is partially ordered as in Definition 
\ref{def:2.1}.2), while $\ncb (p,q)$ is partially ordered by reverse 
refinement. This is the content of 
Theorem 1.4 in~\cite{NO07}.

$2^o$ From Definition \ref{def:2.2} it is clear that a partition 
$\pi \in \ncb (p,q)$ can never have more than one inversion-invariant
block (if such a block exists, then it is unique).

$3^o$ Let $\widehat{0}$ be the partition of 
$\{ \pm 1, \ldots , \pm (p+q) \}$ into $2(p+q)$ singletons, and 
let $\widehat{1}$ be the partition of 
$\{ \pm 1, \ldots , \pm (p+q) \}$ that has only one block. 
Then $\widehat{0}, \widehat{1} \in \ncb (p,q)$, as it is
clear that $\widehat{0} = \Otilda ( \ee )$ and 
$\widehat{1} = \Otilda ( \gamma_{p,q} )$ (where $\ee$ is the unit
of $B_{p+q}$, while $\gamma_{p,q}$ is as in Equation 
(\ref{eqn:2.4})). The partitions $\widehat{0}$ and 
$\widehat{1}$ are the minimal and maximal elements, respectively, 
of the poset $\ncb (p,q)$.
\end{rem}
 
\begin{rem}    \label{rem:2.4}
{\em (Rank and connectivity for a partition in $\ncb (p,q)$.)}

$1^o$ It is immediate that $\sncb (p,q)$ is a ranked poset, where 
the rank of a permutation $\tau \in \sncb (p,q)$ is given by
the length $\ell_B ( \tau )$ from Definition \ref{def:2.1}.1.
It is moreover not hard to see that 
$\ell_B ( \tau )$ can be alternatively described in terms of the 
cycle structure of $\tau$, by the formula
\begin{equation}  \label{eqn:2.2}
\ell_B ( \tau ) = (p+q) - \frac{1}{2} \cdot \Bigl( \# \mbox{ of 
orbits $A$ of $\tau $ such that $A \neq -A$} \Bigr) .
\end{equation} 
As a consequence, we see that $\ncb (p,q)$ is a ranked poset 
as well, where the rank of a partition $\pi \in \ncb (p,q)$ is 
given by the formula
\begin{equation}  \label{eqn:2.7}
\mbox{rank} ( \pi ) = (p+q) - \frac{1}{2} \cdot \Bigl( \# \mbox{ of 
blocks of $\pi$ that are not inversion-invariant} \Bigr) .
\end{equation}

$2^o$ Another important statistic for partitions in $\ncb (p,q)$ 
is connectivity. For $\pi \in \ncb (p,q)$ we will call 
{\em connectivity} of $\pi$ the number
\begin{equation}  \label{eqn:2.8}
c := \frac{1}{2} \left( \begin{array}{c}
\mbox{\# of blocks $A$ of $\pi$ such that $A \neq -A$}  \\
\mbox{and such that $A$ intersects both sets}           \\
\mbox{$\{ \pm 1, \ldots , \pm p \}$ and 
               $\{ \pm (p+1), \ldots , \pm (p+q) \}$ }
\end{array}  \right) .
\end{equation}
An important fact concerning the concept of connectivity is 
the following: 
\begin{equation}  \label{eqn:2.9}
\left\{  \begin{array}{l}
\mbox{if $\pi \in \ncb (p,q)$ has connectivity $c > 0$,}  \\
                                                             \\
\mbox{then $\pi$ has no inversion-invariant blocks}
\end{array} \right.
\end{equation}
(see \cite{NO07}, Proposition 3.4). 
Thus the blocks of a partition $\pi$ with connectivity $c > 0$
all come in pairs $A, -A$ with $A \neq -A$; there are $c$ pairs of 
blocks as in (\ref{eqn:2.8}), while each of the 
remaining pairs is either ``exterior'' 
($A, -A \subseteq \{ \pm 1, \ldots , \pm p \}$)
or ``interior'' 
($A, -A \subseteq \{ \pm (p+1), \ldots , \pm (p+q) \}$). Note 
moreover that if $c>0$ and if $e$ and $i$ denote, respectively,
the number of exterior and of interior pairs of blocks of $\pi$, 
then one has the inequalities:
\begin{equation}  \label{eqn:2.10}
\left\{  \begin{array}{l}
1 \leq c \leq \min \{ p,q \} , \mbox{ and }   \\
                                              \\
0 \leq e \leq p-c, \ \ 0 \leq i \leq q-c.
\end{array}  \right.
\end{equation}
\end{rem}

\begin{rem}    \label{rem:2.5}
It is instructive at this point to give a brief discussion, 
based on connectivity,
about how the adjusted orbit map $\otilda : \sncb (p,q) \to \ncb (p,q)$ 
works. Let $\pi$ be a partition in $\ncb (p,q)$, and let $c$ be the 
connectivity of $\pi$. There are two possible cases. 

\noindent 
(a) $c > 0$. Then by the fact stated in (\ref{eqn:2.9}) above we have 
$\pi = \Omega ( \tau ) = \otilda ( \tau )$,
where $\tau$ is a (uniquely determined) permutation in $\sncb (p,q)$,
and $\tau$ has no inversion-invariant orbits. 

\noindent
(b) $c=0$. Let $\tau$ denote the unique 
permutation in $\sncb (p,q)$ such that $\otilda ( \tau ) = \pi$. 
Then every orbit of $\tau$ is contained 
either in $\{ \pm 1, \ldots , \pm p \}$ or in 
$\{ \pm (p+1), \ldots , \pm (p+q) \}$ (see Lemma 3.3 of \cite{NO07}).
Moreover, $\tau$ can have at most one inversion-invariant orbit 
contained in $\{ \pm 1, \ldots , \pm p \}$, and at most 
one inversion-invariant 
orbit contained in $\{ \pm (p+1), \ldots , \pm (p+q) \}$ (this is 
due to the fundamental fact from \cite{R97} that partitions in
$\ncb (p)$ or $\ncb (q)$ can have at most one zero-block). If $\tau$
has two inversion-invariant orbits, then $\pi$ is obtained from the 
orbit partition $\Omega ( \tau )$ by joining together these two 
orbits; otherwise (if $\tau$ has at most one inversion-invariant orbit) 
we just have $\pi = \Omega ( \tau )$. 

The case (b) of the discussion was the more complicated one to 
describe, but one should keep in mind that typically this is the 
simpler case to handle. Indeed, the case (b) can be summarized as 
follows: if $\pi \in \ncb (p,q)$ has connectivity equal to 0, then 
$\pi$ is obtained by ``putting together'' a partition 
$\pi_{ext} \in \ncb (p)$ and a partition $\pi_{int} \in \ncb (q)$, 
with a special rule for 
what to do when both $\pi_{ext}$ and $\pi_{int}$ have zero-blocks.
\end{rem}

\begin{rem}  \label{rem:2.6}
We conclude this section with a comment on ``how to draw pictures'' 
of partitions in $\ncb (p,q)$. In fact, what one does 
is to draw (equivalently) pictures of permutations in $\sncb (p,q)$.
In order to do this, one starts by representing the elements of
$\{ \pm 1, \ldots , \pm (p+q) \}$ as points on the boundary of an
annulus: on the outside circle of the annulus we mark $2p$ points 
which we label clockwise as $1, \ldots, p, -1, \ldots , -p$ (in 
this order), and on the inside circle of the annulus we mark $2q$
points which we label counterclockwise as 
$p+1, \ldots, p+q, -(p+1), \ldots , -(p+q)$ (in this order).
In terms of pictures drawn in this annulus, the fact that a
permutation $\tau \in B_{p+q}$ belongs to $\sncb (p,q)$
corresponds then to the following prescription: one can draw a 
closed contour for each of the cycles of $\tau$, such that

\noindent
(i) each of the contours does not self-intersect, and goes clockwise
around the region it encloses;

\noindent
(ii) the region enclosed by each of the contours is contained in the
annulus;

\noindent
(iii) regions enclosed by different contours are mutually disjoint.

Two concrete examples of such drawings are given 
in Figure 1 below, in the particular case when $p=4$ and $q=2$.
On the left we have the drawing of the permutation 
\[
\tau_1 = (1,2,5)(-1,-2,-5)(3,-6)(-3,6)(4)(-4) \in \sncb (4,2);
\]
the partition corresponding to it is 
\[
\pi_1 = \Omega ( \tau_1 ) = \Otilda ( \tau_1 ) 
= \Bigl\{ \, \{ 1,2,5 \}, \{ -1,-2,-5 \}, \{ 3,-6 \},
\{ -3, 6 \}, \{ 4 \}, \{ -4 \} \, \Bigr\},
\]

\begin{center}
\scalebox{.72}{\includegraphics{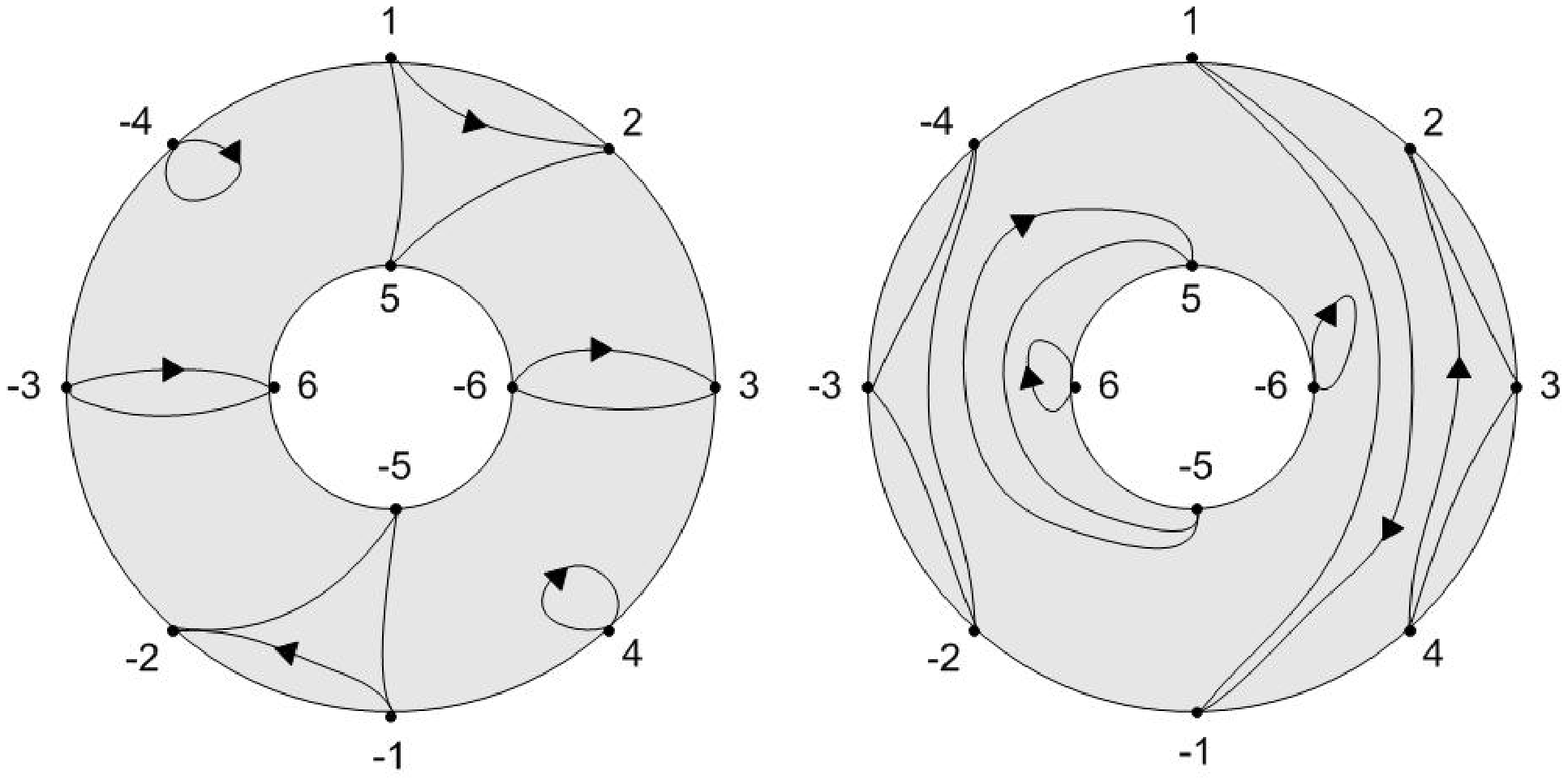}}

\vspace{14pt}

{\bf Figure 1.} Examples of pictures of
permutations in $\sncb (4,2)$.
\end{center}

\noindent
which has connectivity $c=2$. On the right of Figure 1 we have the 
drawing of the permutation 
\[
\tau_2 = (1,-1)(2,3,4)(-2,-3,-4)(5,-5)(6)(-6) \in \sncb (4,2);
\]
the partition corresponding to it is 
\[
\pi_2 = \Otilda ( \tau_2 ) 
= \Bigl\{ \, \{ 1,-1,5,-5 \}, \{ 2,3,4 \}, \{ -2,-3,-4 \}, 
\{ 6 \}, \{ -6 \} \, \Bigr\},
\]
which has connectivity $c=0$. Note that in the latter example we 
have $\Otilda ( \tau_2 ) \neq \Omega ( \tau_2 )$, since the 
inversion-invariant block $\{ 1,-1,5,-5 \}$ of 
$\Otilda ( \tau_2 )$ is obtained by joining together the two 
inversion-invariant orbits of $\tau_2$.
\end{rem}

\section{Rank cardinalities and M\"obius function for 
\boldmath{$\ncb (n-1,1)$} }
\setcounter{section}{3}

Whereas the poset $\ncb (p,q)$ isn't a lattice in general, it is 
\setcounter{equation}{0} 
nevertheless true that $\ncb (n-1,1)$ is a lattice for every 
$n \geq 2$; and moreover, 
the meet operation on $\ncb (n-1,1)$ coincides with the usual 
``intersection meet'' for partitions -- the blocks of the meet 
$\pi \wedge \rho \in \ncb (n-1,1)$ are precisely the non-empty 
intersections $A \cap B$ where $A$ is a block of $\pi$ and $B$ is 
a block of $\rho$. For a proof of these facts, see
Theorem 1.5 of \cite{NO07}.
The present section is devoted to this special ``lattice'' case,
when the formulas for both the rank generating function and the 
M\"obius function are nicer, and can be easily derived from known
facts about $NC(n)$ and $\ncb (n)$.

The rank cardinalities for $\ncb (n-1,1)$ will be presented in
Theorem \ref{thm:3.2}. We first record a few known facts 
that will be used in the proof of this theorem.

\begin{rem}  \label{rem:3.1}

$1^o$ We will use the well-known binomial identity
\begin{equation} \label{vanderm}
\sum_{k=0}^{n-r} {n \choose k}{n \choose k+r} ={2n \choose n-r}
\end{equation}
for any integers $0 \leq r \leq n$. This is a special case of
the Chu-Vandermonde identity (see
for instance Corollary 2.2.3 on page 67 of \cite{AAR99}).

$2^o$ We will use the rank generating functions for the posets 
$NC(n) \Bigl( \, = \nca (n) \, \Bigr)$ and $\ncb (n)$. 

(A) The rank of a partition $\pi \in \nca (n)$ is given by the
formula 
\[
\mbox{rank}( \pi ) = n - \left( \# \mbox{ of blocks of } \pi
\right) .
\]
For every $0 \leq k \leq n-1$, we have (see Corollary~4.1 
of~\cite{K72}) that 
\begin{equation}  \label{eqn:3.01}
\vline \ \Bigl\{ \pi \in \nca (n) \mid \mbox{rank} ( \pi ) = k 
         \Bigr\} \ \vline \ = \ \frac{1}{n} 
\left( \begin{array}{c} n \\ k \end{array} \right)
\left( \begin{array}{c} n \\ k+1 \end{array} \right) .
\end{equation}
The numbers appearing on the right-hand side of 
(\ref{eqn:3.01}) are called {\em Narayana numbers}. The total
number of partitions in $\nca (n)$ is the Catalan number
\begin{equation}  
\vline \,  \nca (n) \, \vline \ = \ \frac{1}{n+1} {2n \choose n}.
\end{equation}

(B) The rank of a partition $\pi \in \ncb (n)$ is given by the
formula 
\[
\mbox{rank}( \pi ) = n - \frac{1}{2} \left( \begin{array}{l}
\# \mbox{ of blocks $A$ of } \pi    \\
\mbox{such that $A \neq -A$}
\end{array}  \right) .
\]
For every $0 \leq k \leq n$, we have (see Proposition 6 of 
\cite {R97}) that 
\begin{equation}  \label{eqn:3.02}
\vline \ \Bigl\{ \pi \in \ncb (n) \mid \mbox{rank} ( \pi ) = k 
         \Bigr\} \ \vline \ = \ 
{n \choose k}^2.
\end{equation}
The total number of partitions in $\ncb (n)$ is
\begin{equation}  \label{eqn:3.021}
\vline \, \ncb (n) \, \vline \ = \ {2n \choose n}.
\end{equation}

$3^o$ We will use a natural ``absolute value map'' that sends
$\ncb (n)$ to $\nca (n)$. We start with the map
$\mbox{Abs} : \{ \pm 1, \ldots , \pm n \} \to \{ 1, \ldots , n \}$
that sends $\pm i$ to $i$, for every $1 \leq i \leq n$. Note that
for every $\pi \in \ncb (n)$ it makes sense to consider the partition 
of $\{ 1, \ldots , n \}$ into blocks of the form Abs$(B)$, with $B$ 
a block of $\pi$; this partition of $\{ 1, \ldots , n \}$ will be 
denoted by ``Abs$( \pi )$''. It turns out that 
$\mbox{Abs}( \pi ) \in \nca (n)$ for every $\pi \in \ncb (n)$, and 
moreover, that the map 
\begin{equation}  \label{eqn:3.25}
\ncb (n) \ni \pi \mapsto \mbox{Abs} ( \pi ) \in \nca (n)
\end{equation}
defined in this way is an $(n+1)$-to-1 map (see Section 1.3
of \cite{BGN03}). In the proof of the next theorem we will use the 
following property (also observed in Section 1.3 of \cite{BGN03})
of the map Abs from (\ref{eqn:3.25}):
\begin{equation}  \label{eqn:3.26}
\left\{  \begin{array}{l} 
\mbox{Given a partition $\pi_o \in \nca (n)$ and a block 
                                         $A$ of $\pi_o$}   \\
\mbox{there exists a unique $\pi \in \ncb (n)$ with a 
                                           zero-block $Z$} \\
\mbox{such that Abs$( \pi ) = \pi_o$ and Abs$(Z) = A$.}
\end{array}  \right.
\end{equation}
\end{rem}

\begin{thm}  \label{thm:3.2}
Let $n \geq 2$ be an integer. Then 
\begin{equation}  \label{eqn:3.1}
\vline \ \ncb (n-1,1) 
\ \vline \ = \ 
\left( \begin{array}{c} 2n \\ n \end{array} \right),
\end{equation}
and for every $0 \leq k \leq n$ we have that
\begin{equation}  \label{eqn:3.2}
\vline \ \Bigl\{ \pi \in \ncb (n-1,1) \mid \mbox{rank} ( \pi ) 
= k \Bigr\} \ \vline \ = \ 
\left( \begin{array}{c} n \\ k \end{array} \right)^2.
\end{equation}
\end{thm}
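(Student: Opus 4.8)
The plan is to first observe that (\ref{eqn:3.1}) follows from (\ref{eqn:3.2}): summing $\binom{n}{k}^2$ over $0\le k\le n$ and applying the Chu--Vandermonde identity (\ref{vanderm}) with $r=0$ gives $\sum_{k=0}^{n}\binom{n}{k}^2=\binom{2n}{n}=|\ncb(n-1,1)|$. So the whole content is the rank-cardinality formula (\ref{eqn:3.2}), which I would prove by splitting $\ncb(n-1,1)$ according to the connectivity $c$ of Remark~\ref{rem:2.4}.2. Since here $p=n-1$ and $q=1$, the bound (\ref{eqn:2.10}) forces $c\le\min\{p,q\}=1$, so only $c=0$ and $c=1$ occur; I would count the rank-$k$ partitions in each case separately, calling the counts $N_0(k)$ and $N_1(k)$.

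For $c=0$ I would use the reduction of Remark~\ref{rem:2.5}(b): such a $\pi$ is assembled from some $\pi_{ext}\in\ncb(n-1)$ on the outer points $\{\pm 1,\ldots,\pm(n-1)\}$ and some $\pi_{int}\in\ncb(1)$ on the inner points $\{n,-n\}$, with the rule that two zero-blocks are merged. Since $\ncb(1)$ has only the two elements $\{\{n\},\{-n\}\}$ and $\{\{n,-n\}\}$, there are exactly two sub-cases. When $\pi_{int}=\{\{n\},\{-n\}\}$ the inner points contribute two singleton (non-inversion-invariant) blocks and the rank formula (\ref{eqn:2.7}) gives $\mbox{rank}(\pi)=\mbox{rank}(\pi_{ext})$; when $\pi_{int}=\{\{n,-n\}\}$ the inner points contribute, after the possible merge, a single inversion-invariant block and $\mbox{rank}(\pi)=\mbox{rank}(\pi_{ext})+1$. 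Feeding the rank cardinalities (\ref{eqn:3.02}) for $\ncb(n-1)$ into each sub-case yields $N_0(k)=\binom{n-1}{k}^2+\binom{n-1}{k-1}^2$.

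The case $c=1$ is where the real work lies, and it is the step I expect to be the main obstacle. Here $\pi$ has no inversion-invariant block (by (\ref{eqn:2.9})), exactly one pair $C,-C$ of blocks connecting the two circles (say with $n\in C$), no interior pairs (since (\ref{eqn:2.10}) forces $i\le q-c=0$), and $e=n-1-k$ exterior pairs, the last identity following directly from (\ref{eqn:2.7}). To enumerate these connected partitions I would cut the annulus open along this unique connecting block: because $\pi$ is non-crossing and invariant under $x\mapsto -x$, the symmetric ``bridge'' formed by $C$ and $-C$ separates the annulus into two regions interchanged by the inversion, and slitting along it unrolls the annulus into a disc in which $n$ and $-n$ become two ordinary boundary points. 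I expect this operation to identify the connected rank-$k$ partitions with rank-$(k-1)$ type-A non-crossing partitions of the $n-1$ outer points, decorated by a marked point (from the $n-1$ outer points, recording where the bridge is anchored) and a binary choice (recording which arc the inner point attaches to). By the Narayana formula (\ref{eqn:3.01}) this predicts $N_1(k)=2(n-1)\cdot\frac{1}{n-1}\binom{n-1}{k-1}\binom{n-1}{k}=2\binom{n-1}{k}\binom{n-1}{k-1}$. Verifying that this cutting map is a well-defined bijection carrying exactly these decorations, and that the ranks transform as claimed, is the delicate point.

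Finally I would combine the two cases: $N_0(k)+N_1(k)=\binom{n-1}{k}^2+2\binom{n-1}{k}\binom{n-1}{k-1}+\binom{n-1}{k-1}^2=\bigl(\binom{n-1}{k}+\binom{n-1}{k-1}\bigr)^2=\binom{n}{k}^2$ by Pascal's rule, which is (\ref{eqn:3.2}). As a sanity check I would confirm the case $n=2$ (the poset $\ncb(1,1)$, with rank cardinalities $1,4,1$), where the two connected rank-$1$ partitions $\{\{1,2\},\{-1,-2\}\}$ and $\{\{-1,2\},\{1,-2\}\}$ display the factor $2$ explicitly.
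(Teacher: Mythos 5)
Your proposal is correct and follows essentially the same route as the paper: split by connectivity, count the $c=0$ partitions as a product of $\ncb (n-1)$ with the two elements of $\ncb (1)$ (giving $\binom{n-1}{k}^2+\binom{n-1}{k-1}^2$), and count the $c=1$ partitions as $2(n-1)$ times the Narayana number $\frac{1}{n-1}\binom{n-1}{k-1}\binom{n-1}{k}$. The paper realizes your cut-the-bridge bijection in two steps --- first collapsing the connecting pair $A,-A$ to a zero-block of a rank-$k$ partition in $\ncb (n-1)$ together with a marked element of that zero-block, then applying the absolute-value map to land in $\nca (n-1)$ at rank $k-1$ --- but this is the same bijection and the same count, down to the geometric description of the inverse map.
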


\begin{proof} Equation (\ref{eqn:3.1}) follows from (\ref{eqn:3.2})
and (\ref{vanderm}), hence it will suffice to verify~(\ref{eqn:3.2}).
We fix a $k$, for which we will prove (\ref{eqn:3.2}). We 
will assume $k \neq 0$ (the case $k=0$ is obvious).

{}From the first inequality (\ref{eqn:2.10}) in Remark \ref{rem:2.4}
it is clear that every partition in $\ncb (n-1,1)$
has connectivity equal to 0 or 1. Let us denote
\begin{equation}  \label{eqn:3.3}
\left\{  \begin{array}{ccl}
\cC & := & \{ \pi \in \ncb (n-1,1) \mid 
             \pi \mbox{ has rank $k$ and connectivity } 1 \} ,   \\
    &                                                            \\
\cD & := & \{ \pi \in \ncb (n-1,1) \mid 
             \pi \mbox{ has rank $k$ and connectivity } 0 \} . 
\end{array}   \right.
\end{equation}
We note that every partition $\pi \in \cD$ must be of the form 
$\pi = \otilda ( \tau )$, where $\tau$ is a permutation in 
$\sncb (n-1,1)$ that leaves invariant the set $\{ n, -n \}$. Clearly, 
there are only two possibilities 
for how $\tau$ can act on $\{ n, -n \}$: either $\tau (n) = n$
and $\tau (-n) = -n$, or $\tau (n) = -n$ and $\tau (-n) = n$. We will 
denote by $\cD_{+}$ and $\cD_{-}$, respectively, the set of partitions 
$\pi \in \cD$ for which the first (respectively the second) of these 
possibilities occurs. We thus have $\cD = \cD_{+} \cup \cD_{-}$, 
disjoint, and it is clear that
\begin{equation}  \label{eqn:3.4}
\vline \ \{ \pi \in \ncb (n-1,1) \mid \mbox{rank}(\pi) = k \} 
\ \vline \ = \ \vline \, \cC \ \vline \ + \ 
\vline \, \cD_{+} \, \vline \ + \ 
\vline \, \cD_{-} \, \vline \ .
\end{equation}

It is immediate to see that $\cD_{+}$ and in $\cD_{-}$ are in 
bijection with the sets of partitions in $\ncb (n-1)$ that have
rank equal to $k$, and respectively $k-1$. (For instance for 
$\cD_{-}$ we observe that every $\pi \in \cD_{-}$ is canonically
obtained from a partition $\pi_o$ of rank $k-1$ in $\ncb (n-1)$,
as follows:
if $\pi_o$ has no zero-block then we add to it a 2-element block 
$\{ n, -n \}$, while if $\pi_o$ has a zero-block $Z$ then we 
replace $Z$ by $Z \cup \{ n , -n \}$.) By taking (\ref{eqn:3.02})
into account, we thus find that
\[
\mid  \cD_{+}  \mid \ = {n-1\choose k}^2 \ \ \mbox{ and }
\mid \cD_{-} \mid \ = { n-1 \choose k-1 }^2.
\]

Let us now count the partitions in the set $\cC$ from
(\ref{eqn:3.3}). Let $\pi$ be in $\cC$, and let us denote by $A$ the 
block of $\pi$ that contains $n$. We know that $A \neq -A$, 
and that $A \cap \{ \pm 1, \ldots , \pm (n-1) \} \neq \emptyset$.
Let $\pi_o$ be the partition of $\{ \pm 1 , \ldots , \pm (n-1) \}$ 
that is obtained from $\pi$ by taking its blocks $A$ and $-A$ and 
replacing them with just one block, 
$Z := \bigl( A \cup (-A) \bigr) \setminus \{ n , -n \}$.
It is immediately seen that $\pi_o \in \ncb (n-1)$, and that the 
rank of $\pi_o$ in $\ncb (n-1)$ is equal to $k$. The partition 
$\pi$ we started with cannot be uniquely retrieved from $\pi_o$,
but a moment's thought shows that $\pi$ {\em can} be uniquely 
retrieved from the pair $( \pi_o , \tau (n) )$, where 
$\tau \in \sncb (n-1,1)$ is the permutation that corresponds to 
$\pi$. (The number $m= \tau (n) \in \{ \pm 1, \ldots , \pm (n-1) \}$ 
could simply be described as ``the point of $A$ that follows $n$'', 
when we move around $A$ in clockwise order.) 

The observations made in the preceding paragraph give us
a one-to-one map
\begin{equation}  \label{eqn:3.5}
\cC \ni \pi \mapsto \Bigl( \pi_o , \tau (n) \Bigr) \in
\Bigl\{ \, ( \pi_o , m ) \begin{array}{ll}
\vline & \pi_o \in \ncb (n-1) \mbox{ of rank $k$ and}  \\
\vline & \mbox{ with zero-block $Z$, and $m \in Z$}
\end{array} \Bigr\} .
\end{equation}
It is quite easy to see that the map in (\ref{eqn:3.5}) is surjective 
as well. In pictorial terms: given $\pi_o \in \ncb (n-1)$ with 
zero-block $Z$, and given an element $m \in Z$, we always know how to 
deform the convex polygon enclosed by $Z$ so that it becomes a union 
of three regions -- a small disc (which is part of a newly created 
annulus), and two regions enclosed by blocks $A, -A$ of a partition
$\pi \in \ncb (n-1,1)$. The role of the element $m \in Z$ in this 
geometric construction is to determine what side of the convex polygon
enclosed by $Z$ has to be deformed, and to indicate where on the 
emerging small disc we should place the labels $n$ and $-n$.

Let us next observe that by using the ``Abs'' map and its property 
reviewed in (\ref{eqn:3.26}) of Remark \ref{rem:3.1}.2, we get another 
bijection
\begin{equation}  \label{eqn:3.6}
( \pi_o , m ) \mapsto \Bigl( \, \mbox{Abs}( \pi_o ), m \, \Bigr) ,
\end{equation}
that sends the set
$\Bigl\{ \, ( \pi_o , m ) \begin{array}{ll}
\vline & \pi_o \in \ncb (n-1) \mbox{ of rank $k$ and}  \\
\vline & \mbox{ with zero-block $Z$, and $m \in Z$}
\end{array} \Bigr\}$
onto the Cartesian product
$\bigl\{ \rho \in \nca (n-1) \mid \mbox{rank} ( \rho ) = k-1 \bigr\} 
\times \{ \pm 1, \ldots , \pm (n-1) \}$.
By using the bijections (\ref{eqn:3.5}) and (\ref{eqn:3.6}) we thus 
find that 
\begin{align*}
\vline \, \cC \, \vline \
& = \ \vline \ \Bigl\{ \rho \in \nca (n-1) \mid 
     \mbox{rank} ( \rho ) = k-1 \Bigr\} \ \vline \ \cdot 2(n-1)  \\
& = \frac{1}{n-1} {n-1\choose k-1}{n-1\choose k} \cdot 2(n-1)
               \ \     \mbox{  (by (\ref{eqn:3.01})) }     \\
& = 2{n-1\choose k-1}{n-1\choose k}.
\end{align*}

We finally return to (\ref{eqn:3.4}) and substitute 
on its right-hand side the values found for the cardinalities 
of $\cC, \cD_{+}$ and $\cD_{-}$, and (\ref{eqn:3.2}) immediately 
follows.
\end{proof}

\begin{center}
\scalebox{.90}{\includegraphics{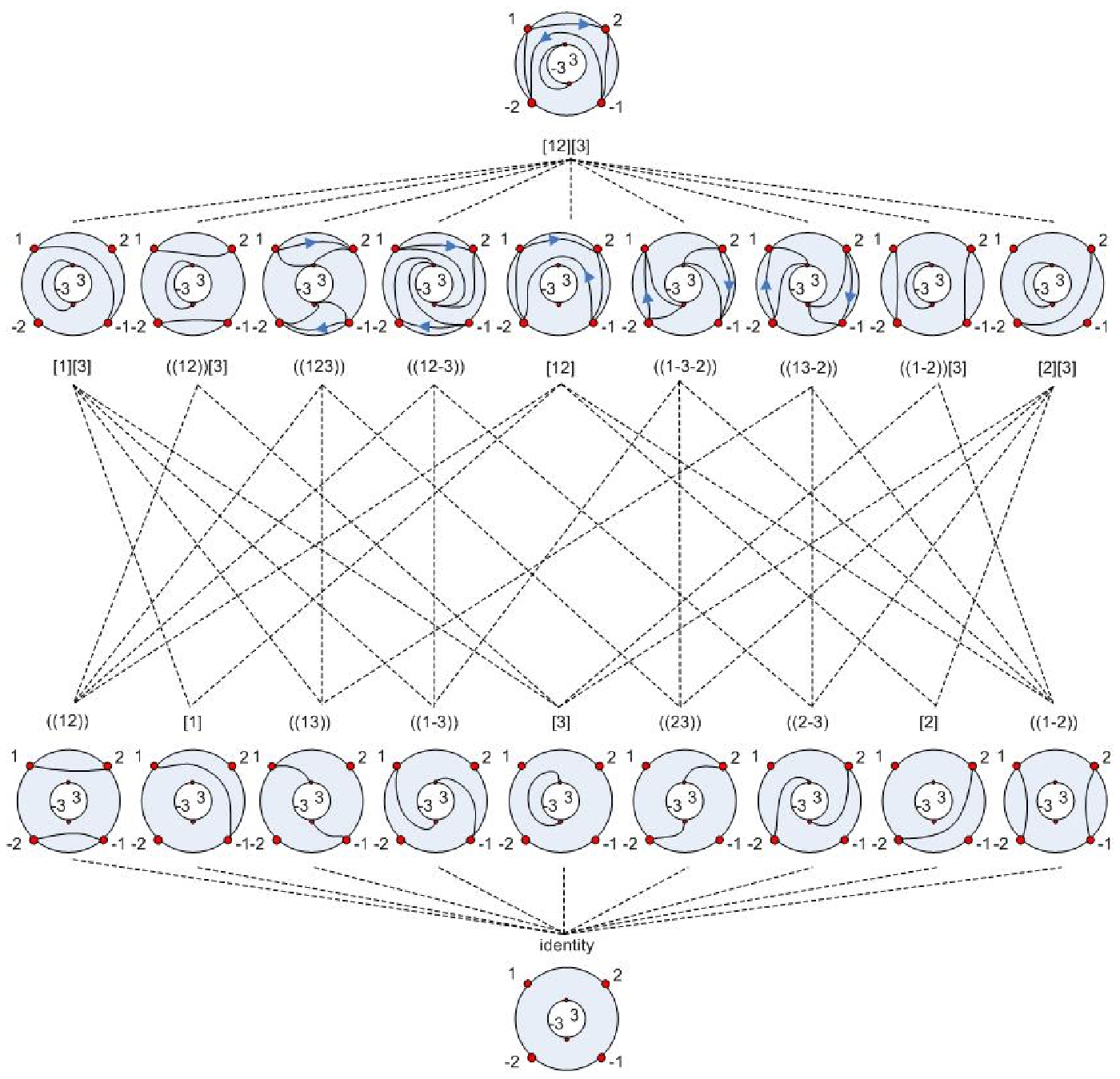}}
\end{center}

\begin{tabular}{cl}
{\bf Figure 2.}
  & The Hasse diagram for $\ncb (2,1)$. The bracket notations
                                      $(( \cdots ))$            \\
  & and $[ \cdots ]$ refer to the cycles of the corresponding
                                        permutations            \\
  & (e.g. $((1,2,-3))$ and $((1,-2))[3]$ are shorthand notations
                                             for the            \\
  & permutations
    $(1,2,-3)(-1,-2,3)$ and $(1,-2)(-1,2)(3,-3)$, respectively).
\end{tabular}

\begin{rem}  \label{rem:3.3}
We note the somewhat surprising fact that $\ncb (n-1,1)$
has exactly the same rank generating function as the lattice
$\ncb (n)$. For $n=2$ we have in fact
$\ncb (1,1) = \ncb (2)$ (equality of sets of partitions of
$\{ 1,2 \} \cup \{ -1, -2 \}$). But already for $n=3$ it is
no longer true that $\ncb (2,1) = \ncb (3)$; moreover, by
comparing the Hasse diagrams of $\ncb (2,1)$ and of $\ncb (3)$,
one easily sees that $\ncb (2,1) \not\simeq \ncb (3)$. (The Hasse
diagram for $\ncb (2,1)$ is drawn in Figure 2 of this paper, while
the one for $\ncb (3)$ appears on page 199 of Reiner's paper
\cite{R97}. In order to establish that
$\ncb (2,1) \not \simeq \ncb (3)$ one can for instance count
edges in the Hasse diagrams -- the Hasse diagram for $\ncb (2,1)$
has 46 edges, while the one for $\ncb (3)$ has 44 edges.)

By comparing the specific formulas that give the
M\"obius functions for $\ncb (n)$ and for $\ncb (n-1,1)$, we will
find in fact that $\ncb (n-1,1) \not\simeq \ncb (n)$ for all
$n \geq 3$; see Remark \ref{rem:3.7} below.

So let us now consider the M\"obius function of $\ncb (n-1,1)$. Its
calculation will be presented in Theorem \ref{thm:3.6}, and will
be based on a partial M\"obius inversion formula that is
described as follows.
\end{rem}

\begin{lemma}  \label{lemma:3.4}
Let $P$ be a finite lattice, let $\widehat{0}$ and
$\widehat{1}$ denote the minimal and the maximal element of $P$,
respectively, and let $\omega$ be a fixed element of $P$, where
$\omega \neq \widehat{1}$. Then
\begin{equation}   \label{eqn:3.7}
\sum\limits_{  \substack{\pi\in P \\
                         \pi \wedge \omega=\widehat{0}} }
\mu_{{}_P}( \pi, \widehat{1} \, ) = 0.
\end{equation}
\end{lemma}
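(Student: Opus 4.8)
This is the dual form of Weisner's theorem, and the plan is to prove it by the standard two-step Möbius interchange. Throughout I write $\mu$ for $\mu_{{}_P}$, and I will use the two defining recurrences of the Möbius function on $P$: for any $y \in P$ one has $\sum_{\widehat{0} \le \sigma \le y} \mu(\widehat{0}, \sigma) = \delta_{\widehat{0}, y}$, and for any $x \in P$ one has $\sum_{x \le \pi \le \widehat{1}} \mu(\pi, \widehat{1}) = \delta_{x, \widehat{1}}$. The lattice hypothesis enters only to guarantee that the meet $\pi \wedge \omega$ is a well-defined element of $P$ for every $\pi$; this is what lets me rewrite the constraint $\pi \wedge \omega = \widehat{0}$ in a form amenable to Möbius inversion.

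First I would rewrite the left-hand side of (\ref{eqn:3.7}) as a sum over all of $P$ weighted by the indicator of the constraint, $T := \sum_{\pi \in P} \mu(\pi, \widehat{1}) \, [\pi \wedge \omega = \widehat{0}]$. The key move is to resolve the indicator using the first recurrence applied at $y = \pi \wedge \omega$, which gives $[\pi \wedge \omega = \widehat{0}] = \sum_{\sigma \le \pi \wedge \omega} \mu(\widehat{0}, \sigma)$. Next I would replace the single condition $\sigma \le \pi \wedge \omega$ by the equivalent pair of conditions $\sigma \le \pi$ and $\sigma \le \omega$ (here again the meet is used), obtaining $T = \sum_{\pi \in P} \mu(\pi, \widehat{1}) \sum_{\sigma \le \pi, \ \sigma \le \omega} \mu(\widehat{0}, \sigma)$.

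The decisive step is to interchange the order of summation, summing first over $\sigma \le \omega$ and then over those $\pi$ with $\pi \ge \sigma$, which yields $T = \sum_{\sigma \le \omega} \mu(\widehat{0}, \sigma) \sum_{\pi \ge \sigma} \mu(\pi, \widehat{1})$. The inner sum is exactly $\sum_{\sigma \le \pi \le \widehat{1}} \mu(\pi, \widehat{1})$, which equals $\delta_{\sigma, \widehat{1}}$ by the second recurrence. Hence $T = \sum_{\sigma \le \omega} \mu(\widehat{0}, \sigma) \, \delta_{\sigma, \widehat{1}}$, so the only term that could survive is the one indexed by $\sigma = \widehat{1}$.

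Finally I would invoke the hypothesis $\omega \neq \widehat{1}$: since $\widehat{1}$ is the maximal element of $P$, having $\widehat{1} \le \omega$ would force $\omega = \widehat{1}$, so under our hypothesis the index $\sigma = \widehat{1}$ never occurs in the range $\sigma \le \omega$. Therefore every term of the last sum vanishes and $T = 0$, which is (\ref{eqn:3.7}). I do not anticipate a genuine obstacle; the only points requiring care are the bookkeeping in the interchange of summations and the observation that it is precisely the assumption $\omega \neq \widehat{1}$, together with the maximality of $\widehat{1}$, that eliminates the one potentially nonzero term.
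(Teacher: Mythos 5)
Your proof is correct. The paper itself gives no argument for this lemma --- it simply cites Corollary 3.9.3 of Stanley's \emph{Enumerative Combinatorics}, vol.\ 1 (the dual form of Weisner's theorem) --- and what you have written is precisely the standard proof of that cited result: expand the indicator $[\pi\wedge\omega=\widehat{0}]$ via the recurrence $\sum_{\sigma\le y}\mu(\widehat{0},\sigma)=\delta_{\widehat{0},y}$, use the meet to split $\sigma\le\pi\wedge\omega$ into $\sigma\le\pi$ and $\sigma\le\omega$, interchange sums, and kill the inner sum with $\sum_{\sigma\le\pi\le\widehat{1}}\mu(\pi,\widehat{1})=\delta_{\sigma,\widehat{1}}$, the surviving term being excluded by $\omega\neq\widehat{1}$. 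All steps are sound, so there is nothing to correct; the only difference from the paper is that you supply the argument rather than outsourcing it.
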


For a proof of Lemma \ref{lemma:3.4}, see Corollary 3.9.3
of \cite{Sta97}. A few other facts needed in the proof of 
Theorem \ref{thm:3.6} are collected in the next remark.

\begin{rem}  \label{rem:3.5}
$1^o$ We will use the explicit formulas known for the M\"obius 
functions of the posets $\nca (n)$ and $\ncb (n)$.

(A) For every $n \geq 1$ we have that 
\begin{equation}  \label{eqn:3.101}
\mu_{\nca (n)} ( \, \widehat{0}, \widehat{1} \, ) = 
(-1)^{n+1}\frac{(2n-2)!}{(n-1)!\,\,n!},
\end{equation}
where $\mu_{\nca (n)}$ is the M\"obius function of $\nca (n)$, and 
$\widehat{0}, \widehat{1}$ are the minimal and 
maximal element of $\nca (n)$, respectively.
(See Theorem~6 of~\cite{K72}.)

(B) For every $n \geq 1$ we have that
\begin{equation}  \label{eqn:3.102}
\mu_{\ncb (n)} ( \, \widehat{0}, \widehat{1} \, ) = 
(-1)^n \cdot \left( \begin{array}{c} 2n-1 \\ n \end{array} \right) ,
\end{equation}
where $\mu_{\ncb (n)}$ is the M\"obius function of $\ncb (n)$, and 
$\widehat{0}, \widehat{1}$ now stand for the minimal and 
maximal element of $\ncb (n)$, respectively.
(See Proposition 7 of \cite{R97}.)

$2^o$ Let $p,q$ be positive integers. It is an easy exercise (left 
to the reader) to check that the formula
\begin{equation}  \label{eqn:3.103}
C ( \tau ) := \tau^{-1} \gamma_{p,q}, \ \ 
\tau \in \sncb (p,q),
\end{equation}
defines a bijection $C: \sncb (p,q) \to \sncb (p,q)$, that is 
order-reversing -- for $\sigma , \tau \in \sncb (p,q)$ one has that
$\sigma \leq \tau \Leftrightarrow C( \sigma ) \geq C( \tau )$,
where the partial order on $\sncb (p,q)$ is as in Definition
\ref{def:2.1}.2. 

Now, by using the canonical isomorphism 
$\otilda : \sncb (p,q) \to \ncb (p,q)$ (see Remark
\ref{rem:2.3}.1), we can transport the map $C$ from (\ref{eqn:3.103})
to an anti-isomorphism $K : \ncb (p,q) \to \ncb (p,q)$, defined
via the formula
\begin{equation}  \label{eqn:3.104}
K ( \, \otilda ( \tau ) \, ) = 
\otilda ( \tau^{-1} \gamma_{p,q} ), \ \ 
\tau \in \sncb (p,q).
\end{equation}
This anti-isomorphism $K$ is the $\ncb (p,q)$--analogue for an
anti-isomorphism of the lattice $\nca (n)$ introduced by 
Kreweras in \cite{K72}, and which is commonly called the 
Kreweras complementation map. Following this trend, we will 
also refer to the map $K$ from (\ref{eqn:3.104}) as
the {\em Kreweras complementation map} of $\ncb (p,q)$.
Note that, due to the fact that it is an anti-isomorphism, the 
Kreweras complementation map has the property that
\begin{equation}  \label{eqn:3.105}
\mu ( \pi , \rho ) = \mu ( \, K( \rho ), K( \pi ) \, ), \ \ 
\forall \, \pi , \rho \in \ncb (p,q) \mbox{ such that } 
\pi \leq \rho ,
\end{equation}
where $\mu$ is the M\"obius function of $\ncb (p,q)$.
\end{rem}

\begin{thm}  \label{thm:3.6}
Let $n \geq 2$ be an integer, let $\mu_{\ncb (n-1,1)}$ be the 
M\"obius function of $\ncb (n-1,1)$, and let 
$\widehat{0}, \, \widehat{1}$ be the minimal and 
maximal element of $\ncb (n-1,1)$, respectively. Then
\begin{equation}  \label{eqn:3.8}
\mu_{\ncb (n-1,1)} ( \,  \widehat{0} , \widehat{1} \, ) =
(-1)^n \cdot \left( \begin{array}{c} 2n-1 \\ n \end{array} \right) 
\cdot\frac{5n-4}{4n-2} .
\end{equation}
\end{thm}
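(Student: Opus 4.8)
The plan is to compute $\mu_{\ncb(n-1,1)}(\widehat{0},\widehat{1})$ by exploiting the lattice structure of $\ncb(n-1,1)$ together with the partial M\"obius inversion formula of Lemma~\ref{lemma:3.4}. The key idea is to choose a good element $\omega$ and split the sum in~(\ref{eqn:3.7}) according to the connectivity statistic. Since $\ncb(n-1,1)$ is a lattice whose meet is the intersection meet, the condition $\pi \wedge \omega = \widehat{0}$ becomes a concrete combinatorial condition: every block of $\pi$ meets every block of $\omega$ in at most one point. I would like to apply~(\ref{eqn:3.7}) to recover $\mu(\widehat{0},\widehat{1})$ as (minus) a sum of $\mu(\pi,\widehat{1})$ over the remaining $\pi$'s, and then evaluate each of those terms using the Kreweras complementation map $K$ from Remark~\ref{rem:3.5}.2.

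Concretely, I would first pick the element $\omega \in \ncb(n-1,1)$ corresponding to a single transposition, and understand which $\pi$ satisfy $\pi \wedge \omega = \widehat{0}$. The natural candidate is to take $\omega$ so that separating off the terms with $\pi = \widehat{0}$ and $\pi = \widehat{1}$ isolates $\mu(\widehat{0},\widehat{1})$, while all the intermediate $\pi$'s in the sum fall into a manageable family. Using the anti-isomorphism $K$ and the identity~(\ref{eqn:3.105}), each term $\mu(\pi,\widehat{1})$ equals $\mu(\widehat{0},K(\pi))$, and $K(\pi)$ lands in an interval which is itself isomorphic to a product of smaller posets of the form $\nca(\cdot)$ or $\ncb(\cdot)$. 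This reduces every term to a product of the known M\"obius values~(\ref{eqn:3.101}) and~(\ref{eqn:3.102}).

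The computation then splits along the connectivity $c \in \{0,1\}$ of the relevant partitions, just as in the proof of Theorem~\ref{thm:3.2}. For connectivity $0$ the interval factors as a product involving $\nca(n-1)$ or $\ncb(n-1)$ plus the trivial behaviour on the pair $\{n,-n\}$; for connectivity $1$ one gets an extra factor accounting for the block straddling the two circles. Collecting the contributions yields an expression of the form $(-1)^n \binom{2n-1}{n}$ times a rational correction factor, and the final task is to verify algebraically that this correction factor simplifies to $\frac{5n-4}{4n-2}$. I expect the bulk of the work to be the bookkeeping in identifying the correct intervals $[\widehat{0},K(\pi)]$ and their product decompositions, together with an application of the Chu-Vandermonde identity~(\ref{vanderm}) to collapse the resulting binomial sums.

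The main obstacle, I anticipate, will be choosing $\omega$ so that the set $\{\pi : \pi \wedge \omega = \widehat{0}\}$ is both small enough to enumerate explicitly and large enough that~(\ref{eqn:3.7}) genuinely pins down $\mu(\widehat{0},\widehat{1})$ rather than some less useful combination. The delicate point is handling the boundary between connectivity $0$ and connectivity $1$ partitions, since the fact~(\ref{eqn:2.9}) forces zero-blocks to disappear once connectivity becomes positive, and this asymmetry is exactly what produces the nonstandard factor $\frac{5n-4}{4n-2}$ rather than a clean closed form matching $\ncb(n)$.
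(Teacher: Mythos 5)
Your overall strategy --- Lemma~\ref{lemma:3.4} combined with the Kreweras complementation map of Remark~\ref{rem:3.5}.2 and the known M\"obius values (\ref{eqn:3.101}), (\ref{eqn:3.102}) --- is exactly the one the paper uses, but the step you yourself flag as the ``main obstacle'' is a genuine gap, and the concrete choice you propose would fail. You suggest taking $\omega$ to be the partition corresponding to a single transposition, i.e.\ an atom of the lattice. For an atom $\omega$ the condition $\pi \wedge \omega = \widehat{0}$ merely says $\pi \not\geq \omega$, so the sum in (\ref{eqn:3.7}) runs over roughly half the lattice and pins down nothing. The choice that works is at the opposite end of the poset:
$\omega = \bigl\{ \, \{ \pm 1, \ldots , \pm (n-1) \}, \{ n \}, \{ -n \} \, \bigr\}$,
an element of rank $n-2$ (in fact the Kreweras complement of the atom $\pi_0$ with block $\{n,-n\}$). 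Since the meet is the intersection meet, $\pi \wedge \omega = \widehat{0}$ then forces every block of $\pi$ to contain at most one element of $\{ \pm 1, \ldots , \pm (n-1)\}$, at most one of $\{n\}$ and at most one of $\{-n\}$, so the only solutions are $\widehat{0}$, the atom $\pi_0$, and the $2(n-1)$ atoms $\pi_{\pm i}$ with blocks $\{i,n\}, \{-i,-n\}$. This yields an identity with exactly $2n$ terms, which is what makes the method bite.

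The evaluation of those terms also differs from your sketch. Applying $K$ and (\ref{eqn:3.105}) turns each $\mu(\pi_i, \widehat{1})$ into $\mu(\widehat{0}, \rho_i)$ with $\rho_i = K(\pi_i)$; an explicit computation of $\tau_i^{-1}\gamma_{n-1,1}$ shows that for $i \neq 0$ the partition $\rho_i$ has just two blocks $A, -A$ with $|A| = n$, so $[\widehat{0}, \rho_i] \simeq \nca(n)$, while $\rho_0 = \omega$ and $[\widehat{0}, \omega] \simeq \ncb(n-1)$. There is no product decomposition of intervals, no splitting by connectivity, and no Chu--Vandermonde summation at this stage; the conclusion is simply
\[
- \mu( \, \widehat{0}, \widehat{1} \, ) = (-1)^{n-1}\binom{2n-3}{n-1} + 2(n-1)\cdot(-1)^{n-1}\cdot\frac{(2n-2)!}{(n-1)!\, n!},
\]
which rearranges to (\ref{eqn:3.8}). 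So the skeleton of your plan is correct, but without the right $\omega$ and the correct identification of the intervals $[\widehat{0}, \rho_i]$ the argument does not go through as written.
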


\begin{proof}
Throughout the proof we will write ``$\mu$'' instead 
of ``$\mu_{\ncb (n-1,1)}$'', for compactness.
We will apply Lemma \ref{lemma:3.4} to the
particular case when $P=\ncb (n-1,1)$ and 
\begin{equation}  \label{eqn:3.110}
\omega := \Bigl\{ \ \{ \pm 1, \ldots , \pm (n-1) \}, \ 
                    \{ n \}, \ \{ -n \} \ \Bigr\} .
\end{equation}
By taking into account that the meet operation of $\ncb (n-1,1)$ is 
just the usual ``intersection'' meet, one immediately sees that the 
partitions in the set 
$\{ \, \pi \in \ncb (n-1,1) \mid \pi \wedge \omega$
$= \widehat{0} \, \}$ can be listed explicitly as 
$\widehat{0}, \pi_0 , \pi_1, \ldots , \pi_{n-1} ,
\pi_{-1}, \ldots , \pi_{-(n-1)}$, where 
\[
\pi_0 := \Bigl\{ \ \{ n, -n \} , \ \{ 1 \}, \ \{ -1 \}, \ldots ,
\{ n-1 \}, \ \{ -(n-1) \} \ \Bigr\}
\]
and where for every $i \in \{ \pm 1, \ldots , \pm (n-1) \}$ we put
\[
\pi_i := \Bigl\{ \ \{ i, n \} , \ \{ -i, -n \} \ \Bigr\} \cup
\Bigl\{ \, \{ j \} \ \mid \ j \in \{ \pm 1, \ldots , \pm (n-1) \} , 
\ |j| \neq |i| \Bigr\} .
\]
When applied to this particular situation, Lemma \ref{lemma:3.4}
thus implies that 
\begin{equation}  \label{eqn:3.11}
0 = \mu( \, \widehat{0}, \widehat{1} \, )
+ \mu \bigl( \pi_0 , \widehat{1} \, \bigr)
+\sum_{i=1}^{n-1}   \mu \bigl( \pi_i , \widehat{1} \, \bigr)
+\sum_{i=1}^{n-1}   \mu \bigl( \pi_{-i} , \widehat{1} \, \bigr) .
\end{equation}
It is convenient to consider the equivalent 
restatement of (\ref{eqn:3.11}) that is obtained by 
taking Kreweras complements and by invoking formula
(\ref{eqn:3.105}) from Remark \ref{rem:3.5}.2:
\begin{equation}  \label{eqn:3.12}
0 = \mu( \, \widehat{0}, \widehat{1} \, )
+ \mu \bigl( \, \widehat{0} , \rho_0   \bigr)
+\sum_{i=1}^{n-1}   \mu \bigl( \ \widehat{0} , \rho_i \bigr)
+\sum_{i=1}^{n-1}   \mu \bigl( \ \widehat{0} , \rho_{-i} \bigr) ,
\end{equation}
where we denoted 
$\rho_i := K ( \pi_i ), \mbox{ for } i \in \{ 0 \} \cup
\{ \pm 1 , \ldots , \pm (n-1) \}$. 

Let us now determine explicitly the partitions $\rho_0$ 
and $\rho_{\pm 1}, \ldots , \rho_{\pm (n-1)}$. We do this by using
the corresponding permutations in $\sncb (n-1,1)$, and formula
(\ref{eqn:3.104}) from Remark \ref{rem:3.5}.2. For 
$i \in \{ \pm 1 , \ldots , \pm (n-1) \}$ we write 
$\pi_i = \otilda ( \tau_i )$ with $\tau_i = (i,n)(-i,-n) \in B_n$, 
and we compute 
\begin{align*}
\tau_i^{-1} \gamma_{n-1,1} 
& = \Bigl( \, (i,n) (-i, -n) \, \Bigr) \,
\Bigl( \, (1, \ldots , n-1, -1, \ldots , -(n-1)) (n,-n) \, \Bigr) \\
& = \Bigl( (1, \ldots , i-1,n,-i, \ldots, -(n-1) \Bigr) \,
\Bigl( (-1, \ldots , -(i-1),-n, i, \ldots, n-1 \Bigr) .
\end{align*}
Since $\rho_i = K ( \, \otilda ( \tau_i ) \, )$
= $ \otilda ( \, \tau_i^{-1} \gamma_{n-1,1} \, )$, we thus 
obtain that, for $i>0$,
\begin{equation}  \label{eqn:3.106}
\rho_i = \Bigl\{ \, \{ 1, \ldots , i-1,n,-i, \ldots, -(n-1) \} , \ 
\{ -1, \ldots , -(i-1),-n, i, \ldots, n-1 \} \, \Bigr\} ,
\end{equation}
with a similar formula (left to the reader) in the case $i<0$.
For $\rho_0$ one does a similar calculation, by writing 
$\pi_0 = \otilda ( \tau_0 )$ for $\tau_0 = (n,-n) \in B_n$. The 
reader should have no difficulty in checking that this calculation 
simply leads to the equality $\rho_0 = \omega$, with $\omega$ 
taken from (\ref{eqn:3.110}).

{}From the explicit form found in (\ref{eqn:3.106}) for $\rho_i$
with $i \in \{ \pm 1 , \ldots , \pm (n-1) \}$, one easily infers 
that the interval $[ \, \widehat{0}, \rho_i ]$ of $\ncb (n-1,1)$ 
is poset isomorphic with the lattice $NC^{(A)} (n)$. Indeed, the 
process of constructing a partition $\sigma \in \ncb (n-1,1)$ such 
that $\sigma \leq \rho_i$ amounts precisely to breaking in a 
non-crossing way the block 
$\{ 1, \ldots , i-1,n,-i, \ldots, -(n-1) \}$ of $\rho_i$,
where the cyclic order of the $n$ elements of the block is as 
listed above. (This must be of course matched by the corresponding,
uniquely determined, non-crossing breaking of the other block
$\{ -1, \ldots , -(i-1),-n, i, \ldots, n-1 \}$ of $\rho_i$.) 
The isomorphism
$[ \, \widehat{0}, \rho_i ] \simeq \nca (n)$ and 
(\ref{eqn:3.101}) thus give us that
\[
\mu( \, \widehat{0},\rho_i)=(-1)^{n+1}\frac{(2n-2)!}{(n-1)!\,\,n!} .
\]
In a similar way, one finds that the interval 
$[ \, \widehat{0}, \rho_0 ]$ of $\ncb (n-1,1)$ is isomorphic with 
$\ncb (n-1)$, and hence (by (\ref{eqn:3.102}))
we have
\[
\mu\bigl( \, \widehat{0}, \rho_0\bigr)
=(-1)^{n-1}{2n-3\choose n-1} .
\]

Finally, by substituting in (\ref{eqn:3.12}) the concrete 
values obtained above for the $\mu ( \, \widehat{0}, \rho_i )$, 
we find that 
\[
- \mu( \, \widehat{0}, \widehat{1} \, )
= (-1)^{n-1}{2n-3\choose n-1}
+ (2n-2)\cdot(-1)^{n-1}\cdot\frac{(2n-2)!}{(n-1)!\,\,n!},
\]
and the required formula for $\mu( \, \widehat{0}, \widehat{1} \, )$
follows by straightforward calculation.
\end{proof}

\begin{rem}  \label{rem:3.7}
By comparing the formula (\ref{eqn:3.8}) found in Theorem 
\ref{thm:3.6} against the corresponding formula (\ref{eqn:3.102}) 
that holds for $\ncb (n)$, we see that 
$\mu_{\ncb (n)} ( \, \widehat{0} , \widehat{1} \, )$ is different
from $\mu_{\ncb (n-1,1)} ( \, \widehat{0} , \widehat{1} \, )$ for 
all $n \geq 3$. This implies, of course, that 
$\ncb (n-1,1) \not\simeq \ncb (n)$ for $n \geq 3$.
\end{rem}

\section{Rank generating function for \boldmath{$\ncb (p,q)$} }
\setcounter{section}{4}

\setcounter{equation}{0}

In this section, we determine the rank generating function 
for $NC^{(B)}(p,q)$. Our results follow directly from a bijection, 
in Proposition~\ref{prop:4.2} below, which is
similar to Lemma 2.1 of~\cite{E80} and Proposition 6 of~\cite{R97}. 
As a preliminary, we have the following discussion of strings of 
parentheses.

\begin{rem}\label{cycle}
We let $\{ (,)\}^*$ be the set of strings of left parentheses 
``$($'' and right parentheses ``$)$''. With multiplication given by 
concatenation, this set forms a monoid, with the empty string acting
as identity element.

If $s=s_1\ldots s_n\in\{ (,)\}^*$, $n\ge 1$, then the nontrivial 
left-substrings of $s$ are given by $u_i := s_1\ldots s_i$,  
$i=1,\ldots ,n$. If all nontrivial left-substrings of $s$ have 
(strictly) more left parentheses than right parentheses, then we 
will say that $s$ is {\em legal from the left}. 

For $s=s_1\ldots s_n\in\{ (,)\}^*$, $n\ge 1$, the {\em cyclic 
shifts} of $s$ are the $n$ strings 
\[
s^{(1)} = s_2 \cdots s_n s_1, \ 
s^{(2)} = s_3 \cdots s_n s_1 s_2, \ \ldots , \ 
s^{(n-1)} = s_n s_1 s_2 \cdots s_{n-1}, \ s^{(n)} = s.
\]
Suppose that $s$ has $m$ more left parentheses than right parentheses, 
for some $m\ge 1$. Then the well-known Cycle Lemma (see for instance
the discussion on page 67 of~\cite{Sta99}) says that exactly $m$ of 
the cyclic shifts of $s$ are legal from the left. 

For example, if $s$ is the string 
$( \, ) \, ( \, ( \, ) \, ( \, ( \ $, which has $5$ left 
parentheses and $2$ right parentheses, then the $3$ cyclic shifts 
of $s$ that are legal from the left are 
\[
s^{(2)}=( \, ( \, ) \, ( \, ( \, ( \, ),  \ \
s^{(5)}=( \, ( \, ( \, ) \, ( \, ( \, ),  \ \
s^{(6)}=( \, ( \, ) \, ( \, ( \, ) \, ( \ .
\]

Symmetrically, if all nontrivial right-substrings of $s$ have more 
right parentheses than left parentheses, then we say that $s$ 
is {\em legal from the right}. For this case, suppose that $s$ has 
$m$ more right parentheses than left parentheses, for some $m\ge 1$.
Then the Cycle Lemma says that exactly $m$ of the cyclic shifts of 
$s$ are legal from the right. 
\end{rem}

\begin{prop} \label{prop:4.2}
Let $p,q$ be positive integers. Suppose that $c,e,i$ are 
integers satisfying the inequalities stated in $(\ref{eqn:2.10})$ 
of Remark \ref{rem:2.4}, that is: $1 \leq c \leq \min \{ p, q \}$ and 
$0 \leq e \leq p-c$, $0 \leq i \leq q-c$. Then there exists a 
bijection between the set
\begin{equation}  \label{eqn:4.101}
\left\{ (d,L^E,R^E,L^I,R^I)  \begin{array}{cl}
\vline & 1 \leq d \leq 2c                               \\
\vline & L^E,R^E \subseteq \{ 1, \ldots , p \} , \  
              \vert L^E\vert =e+c, \ \vert R^E\vert =e, \\
\vline & L^I,R^I \subseteq \{ p+1, \ldots , p+q \} , \
              \vert L^I\vert =i, \ \vert R^I\vert =i+c
\end{array}  \right\}
\end{equation}
and the set of partitions in $NC^{(B)}(p,q)$ that have 
connectivity equal to $c$, have $e$ exterior pairs of blocks, 
and have $i$ interior pairs of blocks.
\end{prop}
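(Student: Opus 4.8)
The plan is to adapt the parenthesis-counting method of Edelman \cite{E80} and Reiner \cite{R97} to the annular setting, using the Cycle Lemma recalled in Remark~\ref{cycle} to account for the parameter $d \in \{ 1, \ldots , 2c \}$. Since $c \geq 1$, Remark~\ref{rem:2.5}(a) allows me to replace the partition by a permutation: every $\pi \in \ncb (p,q)$ of connectivity $c$ equals $\Omega ( \tau ) = \otilda ( \tau )$ for a unique $\tau \in \sncb (p,q)$ having no inversion-invariant orbits, so I would work throughout with $\tau$ and recover $\pi = \otilda ( \tau )$ at the end. Because $\tau ( -a ) = - \tau ( a )$, the whole permutation $\tau$ is already encoded by its behaviour at the positive points $\{ 1, \ldots , p+q \}$, the negative points being forced by the type $B$ symmetry; this is precisely why the tuple in (\ref{eqn:4.101}) only records subsets of $\{ 1, \ldots , p \}$ and of $\{ p+1, \ldots , p+q \}$.

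First I would attach to each positive point an opener/closer label by the standard rule, reading the outer circle clockwise and the inner circle in its labelled order: a point is an opener ``$($'' or a closer ``$)$'' according to whether the contour of $\tau$ through it moves forward or backward along the boundary, and is neutral otherwise. Setting $L^E, R^E$ (respectively $L^I, R^I$) to be the openers and closers among $\{ 1, \ldots , p \}$ (respectively $\{ p+1, \ldots , p+q \}$), the block bookkeeping yields the asserted cardinalities: each of the $e$ exterior pairs contributes a balanced opener/closer to the outer positive arc, each of the $i$ interior pairs a balanced opener/closer to the inner positive arc, while the $c$ connecting pairs create a net surplus of $c$ openers over closers outside and of $c$ closers over openers inside. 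Thus $\vert L^E \vert = e+c$, $\vert R^E \vert = e$, $\vert L^I \vert = i$, $\vert R^I \vert = i+c$, and since the half-turn $j \mapsto -j$ preserves these labels, the full outer circle carries a cyclic parenthesis word with exactly $2c$ more left than right parentheses (and the inner circle $2c$ more right than left).

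To invert the construction, which is the core of the argument, I would start from a tuple $( d, L^E, R^E, L^I, R^I )$, use the four sets together with their mirror images to form the cyclic parenthesis word around the outer circle and the analogous word around the inner circle, and then invoke the Cycle Lemma. The outer word has excess $2c$, so by Remark~\ref{cycle} exactly $2c$ of its cyclic shifts are legal from the left, and $d \in \{ 1, \ldots , 2c \}$ selects one of them. A legal shift parses unambiguously into a non-crossing matching of its balanced part together with $2c$ dangling openers; symmetrically the inner word provides $2c$ dangling closers; and these are joined in the unique non-crossing, half-turn-symmetric fashion across the annular gap. This determines a permutation $\tau$, from which I would read off $\pi := \otilda ( \tau )$ and check that it has connectivity $c$, with $e$ exterior and $i$ interior pairs.

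The hard part will be showing that this pair of maps is well defined and mutually inverse. Concretely, I must verify that every choice of the four sets together with every $d$ produces a genuine element of $\sncb (p,q)$, with no hidden crossing between the connecting strands and the exterior or interior blocks, and conversely that the forward labelling of a partition reproduces exactly the same data. The delicate point is the alignment of the Cycle Lemma: one must exhibit the single cyclic word of excess $2c$ whose legal shifts are in natural bijection with the $2c$ admissible rotational gluings of the annulus, and confirm that the half-turn symmetry of that word forces the reconstructed $\tau$ to satisfy $\tau ( -a ) = - \tau ( a )$ automatically. Once this legality and non-crossing compatibility is established, injectivity and surjectivity follow directly from the construction.
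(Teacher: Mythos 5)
Your overall strategy is the paper's: encode a partition by inserting parentheses around the labels and use the Cycle Lemma of Remark~\ref{cycle} to account for the parameter $d$. The forward (labelling) direction is essentially right, modulo one imprecision: a block whose trace on a circle is a single point must receive \emph{both} an opening and a closing parenthesis there, which your ``opener / closer / neutral'' trichotomy does not allow; the paper's rule --- left parenthesis before the first element of a block, right parenthesis after the last --- handles this, and correspondingly the sets $L^E$ and $R^E$ in (\ref{eqn:4.101}) are permitted to overlap.

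The genuine gap is in the reconstruction. You choose the $d$-th legal cyclic shift of the outer word and then assert that its $2c$ dangling openers are joined to the $2c$ dangling closers of the inner word ``in the unique non-crossing, half-turn-symmetric fashion.'' That joining is not unique: there are exactly $2c$ non-crossing matchings of $2c$ outer points to $2c$ inner points in the annulus (the rotations of any one of them), and \emph{every one} of them is half-turn symmetric, because negation acts on each set of $2c$ dangling parentheses by a cyclic shift of $c$ positions and therefore commutes with every rotational matching. So symmetry buys you nothing, and linearizing only the outer word does not cut the annulus; as written your map either collapses the $d$-dependence or is under-determined. What is missing is a canonical anchoring of the \emph{inner} word: the paper takes the $d$-th legal-from-the-left shift $t_1$ of $u$ together with a canonically chosen (the $2c$-th) legal-from-the-right shift $t_2$ of $v$, and lets the standard parenthesis matching of the linear concatenation $t_1t_2$ dictate the gluing; in the other direction it recovers $d$ by walking counterclockwise from $p+1$ on the inner circle to the first point lying in a connecting block and taking the shift of $u$ that starts at that block's first outer element. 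You correctly flag this alignment as ``the delicate point,'' but deferring it defers essentially the whole content of the proposition, since everything else in the bijection is forced.
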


\begin{proof}
We will describe explicitly the constructions for two maps 
$(d, L^E, R^E, L^I, R^I) \mapsto \pi$ and 
$\pi \mapsto (d, L^E, R^E, L^I, R^I)$, and we will leave it 
as an exercise to the reader to check that these two maps are 
inverse to each other (thus giving together a bijection as stated).
We recommend that the general descriptions given below for the 
two maps are read in parallel with Remark \ref{rem:4.25}, which
illustrates how the maps work on a concrete example.

\vspace{10pt}

{\em A. Description of the map $(d, L^E, R^E, L^I, R^I) \mapsto \pi$.}
Given $(d,L^E,R^E,L^I,R^I)$ as in (\ref{eqn:4.101}), insert left 
and right parentheses into the string 
$$1,\ldots ,p,-1,\ldots ,-p$$
by placing a left (respectively right) parenthesis before (respectively 
after) each occurrence of $j$ and $-j$, for each value $j$ in $L^E$ 
(respectively $R^E$). In this way we obtain the string $u$ of length 
$2(p+2e+c)$, consisting of numbers and parentheses. In $u$, there are
$2c$ more left parentheses than right parentheses, so the Cycle Lemma 
in Remark~\ref{cycle} implies that there are $2c$ cyclic shifts of $u$ 
beginning with a left parenthesis such that the subsequence consisting 
of parentheses only is legal from the left. Suppose that these $2c$ 
cyclic shifts are given by $u^{(i_1)}, \ldots ,u^{(i_{2c})}$, ordered 
so that $i_1<\cdots <i_{2c}$. Then let $t_1=u^{(i_d)}$.

Similarly, insert left and right parentheses into the string
$$p+1,\ldots ,p+q,-(p+1), \ldots ,-(p+q)$$
by placing a left (respectively right) parenthesis before (respectively
after) each occurrence of $j$ and $-j$, for each value $j$ in $L^I$ 
(respectively $R^I$), to obtain the string $v$ of numbers and 
parentheses. In $v$ there are $2c$ more right parentheses than left 
parentheses, so the Cycle Lemma in Remark~\ref{cycle} implies that 
there are $2c$ cyclic shifts of $v$ ending with a right parenthesis
such that the subsequence consisting of parentheses only is legal 
from the right. If these $2c$ cyclic shifts are given by 
$v^{(j_1)}, \ldots ,v^{(j_{2c})}$, ordered
so that $j_1<\cdots <j_{2c}$, then let $t_2=v^{(j_{2c})}$.

Now consider the concatenation $t_1 t_2$ of the two strings $t_1$ 
and $t_2$ found above. From the string $t_1t_2$ we read off a unique 
partition $\pi$ in $NC^{(B)}(p,q)$ in the following way: the numbers 
inside a lowest-level pair of matching parentheses form a block of 
$\pi$; remove these numbers and this pair of parentheses from the
string, and iterate until the string is empty. (See part A of 
Remark \ref{rem:4.25} below, for a concrete example of how this 
works.)

\vspace{10pt}

{\em B. Description of the map $\pi \mapsto (d, L^E, R^E, L^I, R^I)$.}
Let $\pi$ be a partition in $\ncb (p,q)$ that has connectivity equal 
to $c$, has $e$ pairs of external blocks and has $i$ pairs of 
internal blocks. A significant fact we we will use here is 
that (even though $\pi$ is drawn on a circular picture) every 
block of $\pi$ that is either an external block or an internal
block comes with a canonical total order on it, and thus has a
{\em first element} and a {\em last element}.

Indeed, say for instance that $A$ is an external (\textit{i.e.}, such that
$A \subseteq \{ \pm 1, \ldots , \pm p \}$) block of $\pi$. Let 
us choose an element $i \in -A$ and, by starting from this $i$,
let us travel around the external circle of the annulus (in the 
sense that we always use for this circle -- that is, 
clockwise). When we do this, we encounter the elements of $A$
in a certain order, and this order does not depend on our choice 
of the starting point $i \in -A$. (The latter fact is an 
immediate consequence of the fact that the blocks $A$ and $-A$
of $\pi$ do not cross.) We thus end with a ``canonical'' total 
order for the elements of $A$. Clearly, a similar argument can 
be given when we deal with an internal block of $\pi$. 
Moreover, the same argument can be also used to introduce 
a total order on each of the sets 
$A \cap \{ \pm 1, \ldots , \pm p \}$ and 
$A \cap \{ \pm (p+1), \ldots , \pm (p+q) \}$, in the case 
when $A$ is a connecting block of $\pi$.

So then, starting from the given $\pi \in \ncb (p,q)$, let us
draw some parentheses on the picture representing $\pi$, according 
to the following recipe:

\noindent
(a) For every block $A$ of $\pi$ that is either an external block 
or an internal block, we draw a left parenthesis immediately before 
the first element of $A$, and a right parenthesis immediately after 
the last element of $A$.

\noindent
(b) For every connecting block $A$ of $\pi$ we draw a left 
parenthesis immediately before the first element of 
$A \cap \{ \pm 1, \ldots , \pm p \}$, and a right parenthesis 
immediately after the last element of 
$A \cap \{ \pm (p+1), \ldots , \pm (p+q) \}$.

But now, if the parentheses added to the picture of $\pi$ are 
read starting from 1 on the outside circle and starting from 
$p+1$ on the inside circle, then one gets two strings of numbers 
and parentheses $u$ and $v$, that are exactly of the same kind 
as the strings denoted by ``$u$'' and ``$v$'' in 
part A of the proof. Furthermore, it is immediate that the strings 
$u$ and $v$ obtained here correspond to some subsets 
\[
L^E , R^E \subseteq \{ 1, \ldots , p \} , \qquad  
L^I , R^I \subseteq \{ p+1, \ldots , p+q \} , 
\]
which have the properties required in (\ref{eqn:4.101}). 

In order to complete the description of the map 
$\pi \mapsto (d, L^E, R^E, L^I, R^I)$, we are thus left 
to explain how we obtain the number $d \in \{ 1, \ldots , 2c \}$.
It is immediate that determining $d$ (in the context where
we have already identified the strings $u$ and $v$) is equivalent 
to choosing one of the $2c$ cyclic shifts of $u$ that are 
legal from the left. Expressed directly in terms of the partition 
$\pi$, this in turn amounts to choosing one of the $2c$ connecting 
blocks of $\pi$. (To be precise: if a connecting block $A$ is 
chosen, then we pick the cyclic shift of $u$ that starts with 
the left parenthesis placed immediately before the first element of 
$A \cap \{ \pm 1, \ldots , \pm p \}$.) So what we have to do is to
give a procedure for canonically selecting a connecting 
block of $\pi$. To do so, we look at how the 
connecting blocks intersect the interior circle of the annulus: 
we start from $p+1$ and move counterclockwise around the 
interior circle, and stop the first time that we meet an element 
belonging to a connecting block. (See part B of Remark 
\ref{rem:4.25} below for a concrete example of how this works.)
\end{proof}

\begin{rem}  \label{rem:4.25}
Let us illustrate how the two maps described in the proof of the 
preceding proposition work on a concrete example. Consider the 
situation when the integers $p,q,c,e,i$ given in Proposition
\ref{prop:4.2} are $p=5$, $q=3$, $c=1$, $e=2$, $i=1$. 

\vspace{10pt}

{\em A.} Let us determine explicitly the partition 
$\pi \in \ncb (5,3)$ that corresponds (by the first of the two 
maps described in the proof of Proposition \ref{prop:4.2}) to 
the tuple $(d, L^E, R^E, L^I, R^I)$ where 
\begin{equation}  \label{eqn:4.215}
d=2, \ L^E=\{ 2,4,5\}, \ R^E=\{ 1,2\}, \ L^I=\{ 7\}, \ R^I=\{ 6,7\} .
\end{equation}
By inserting parentheses in $1, \ldots , 5,-1, \ldots ,-5$ we 
obtain the following string of length 20, consisting of numbers and 
parentheses:
\begin{equation}  \label{eqn:4.202}
u=1 \, ) \, ( \, 2 \, ) \, 3 \, ( \, 4 \, ( \, 5 \, -1 \, ) 
\, ( \, -2 \, ) \, -3 \, ( \, -4 \, ( \, -5.
\end{equation}
The two cyclic shifts of $u$ that begin with a left parenthesis and
are legal from the left are $u^{(6)}$ and $u^{(16)}$. Since we have 
$d=2$, the string $t_1$ from the description of the above bijection 
is hence:
\[
t_1=u^{(16)}= ( \, -4 \, ( \, -5 \, 1 \, ) \, ( \, 2 \, ) 
\, 3 \, ( \, 4 \, ( \, 5 \,-1 \, ) \, ( \, -2 \, ) \, -3.
\]
In a similar way, by inserting parentheses in $6,7,8,-6,-7,-8$ 
we get
\begin{equation}  \label{eqn:4.203}
v=6 \, ) \, ( \, 7 \, ) \, 8 \ -6 \, ) \, ( \, -7 \, ) \, -8  ,
\end{equation}
and then have
\[
t_2 = v^{(8)} = 
( \, -7 \, ) \, -8 \ 6 \, ) \, ( \, 7 \, ) \, 8 \, -6 \ ).
\]
Finally, we concatenate $t_1$ and $t_2$, and from the string $t_1t_2$ 
we read off the desired partition $\pi \in \ncb (5,3)$, which is
\begin{equation}  \label{eqn:4.204}
\pi = \Bigl\{ \, \{ 1,-5\},\{-1,5\},\{2\},\{-2\},
\{3,-4,-6,8\},\{-3,4,6,-8\},\{7\},\{-7\} \, \Bigr\} .
\end{equation}

\vspace{10pt}

{\em B.} Conversely, let us now start from the partition
$\pi \in \ncb (5,3)$ that appeared in (\ref{eqn:4.204}) above,
and determine explicitly the tuple 
$(d, L^E, R^E, L^I, R^I)$ that corresponds to $\pi$ by the 
second map described in the proof of Proposition \ref{prop:4.2}.

The annular picture for $\pi$ and the parentheses that have to 
be added to it are shown in Figure 3 below. When looking at 
Figure 3, the reader should keep in mind that the placing of a 
parenthesis ``immediately 
before'' (or ``imediately after'') a labelled point on one of 
the circles of the annulus must be always done in agreement 
with the chosen running direction on that particular circle. 
Thus for instance the parenthesis sitting next to $-5$ in Figure 3
is a ``left parenthesis placed immediately before $-5$'', since 
the outside circle is run clockwise; while next to 6 we have a
``right parenthesis placed immediately after 6'', as the running 
direction on the inside circle is counterclockwise.

\begin{center}
\scalebox{.48}{\includegraphics{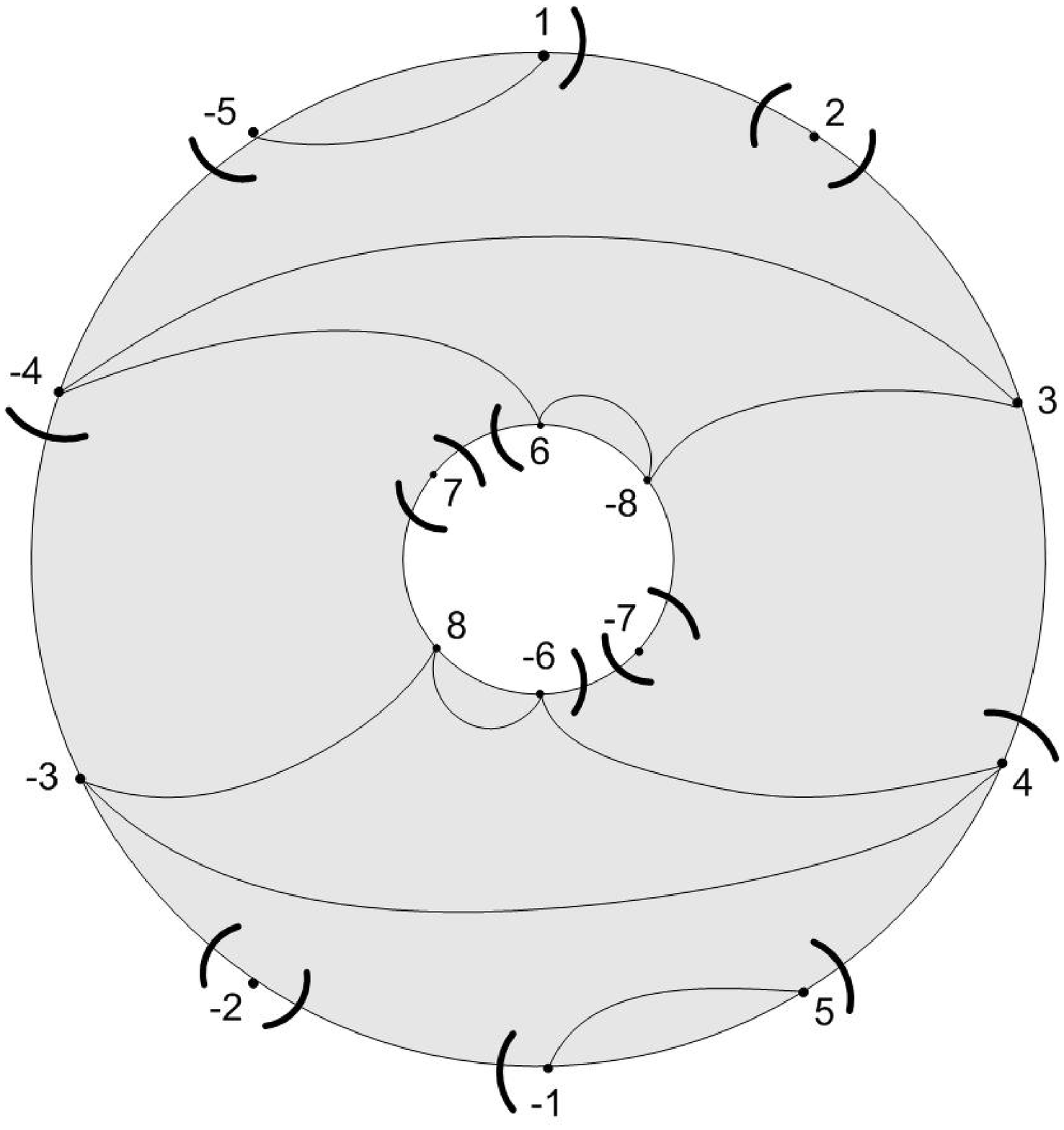}}

{\bf Figure 3.} Adding parentheses to the picture of a 
partition in $\ncb (p,q)$.
\end{center}

$\ $

If we read Figure 3 starting with 1 on the 
outside circle and starting with 6 on the inside circle, we find 
the strings $u$ and $v$ displayed in (\ref{eqn:4.202}) and 
(\ref{eqn:4.203}), respectively, and from these $u$ and $v$ we 
clearly get back to the sets $L^E, R^E$, $L^I, R^I$ indicated 
in (\ref{eqn:4.215}). 

Finally, let us also follow on Figure 3 the procedure for finding 
the value of $d$. What we have to do is to start from $p+1 (=6)$ and 
move counterclockwise around the interior circle of the annulus, 
and stop the first time that we meet an element belonging to a 
connecting block. But in this example the number $6$ belongs 
to the connecting block $A = \{ 3,-4,6,-8 \}$ of $\pi$; 
so this is the connecting block of $\pi$ that is chosen. The 
first element of $A \cap \{ \pm 1, \ldots , \pm p \}$ is $-4$, 
hence we choose the cyclic shift of $u$ that starts with 
``$( \ -4$'', and this corresponds to the value $d=2$.
\end{rem}

\begin{cor}  \label{cor:4.3}
Let $p,q,c,e,i$ be integers such that 
$1 \leq c \leq \min \{ p,q \}$ and 
$0 \leq e \leq p-c$, $0 \leq i \leq q-c$. Then there are exactly
\begin{equation}  \label{eqn:4.301}
2c \, {p\choose e} {p\choose e+c} 
\ {q\choose i} {q\choose i+c} 
\end{equation}
partitions in $NC^{(B)}(p,q)$ that have connectivity equal to $c$, 
have $e$ exterior pairs of blocks, and have $i$ interior pairs of 
blocks.
\end{cor}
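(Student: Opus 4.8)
The plan is to obtain the count as a direct corollary of the bijection established in Proposition~\ref{prop:4.2}: since the set of partitions in $\ncb(p,q)$ with connectivity $c$, having $e$ exterior pairs and $i$ interior pairs of blocks, is in bijection with the set of tuples $(d, L^E, R^E, L^I, R^I)$ described in~(\ref{eqn:4.101}), it suffices to count those tuples. First I would observe that the five components of the tuple range over mutually independent choices, so the total count is simply the product of the number of choices for each component.

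Next I would tally each factor separately. The index $d$ ranges over $\{1, \ldots, 2c\}$, contributing a factor of $2c$. The set $L^E \subseteq \{1, \ldots, p\}$ with $\vert L^E \vert = e+c$ contributes $\binom{p}{e+c}$, and $R^E \subseteq \{1, \ldots, p\}$ with $\vert R^E \vert = e$ contributes $\binom{p}{e}$; note that $L^E$ and $R^E$ are chosen independently (the proposition places no disjointness constraint between them), so these two factors multiply. Similarly, $L^I \subseteq \{p+1, \ldots, p+q\}$ with $\vert L^I \vert = i$ gives $\binom{q}{i}$, and $R^I$ with $\vert R^I \vert = i+c$ gives $\binom{q}{i+c}$. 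Multiplying these five quantities yields exactly
\begin{equation*}
2c \, {p \choose e} {p \choose e+c} {q \choose i} {q \choose i+c},
\end{equation*}
which matches~(\ref{eqn:4.301}).

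Since this is a straightforward product count over independent choices, I do not anticipate a genuine obstacle. The only point requiring a moment's care is confirming that the constraints in~(\ref{eqn:4.101}) impose no hidden interdependence among the five components — in particular that $L^E$ and $R^E$ (and likewise $L^I$ and $R^I$) are selected freely and independently, with sizes fixed but membership unconstrained relative to one another. Once this independence is noted, the multiplicative principle applies verbatim and the formula follows immediately from Proposition~\ref{prop:4.2}.
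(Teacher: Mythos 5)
Your proposal is correct and is exactly the paper's argument: the paper's proof is the one-line observation that the count follows by taking cardinalities in the bijection of Proposition~\ref{prop:4.2}, and you have simply spelled out the product of independent choices ($2c$ for $d$, $\binom{p}{e+c}$, $\binom{p}{e}$, $\binom{q}{i}$, $\binom{q}{i+c}$ for the four subsets). Your remark that the sets carry no disjointness constraints is accurate and justifies the multiplicative count.
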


\begin{proof}
This follows by taking cardinalities in the bijection from 
Proposition \ref{prop:4.2}.
\end{proof}

As an immediate consequence of the above corollary, one can 
enumerate the partitions in $\ncb (p,q)$ by their connectivity.

\begin{thm}  \label{thm:4.4}
Let $p,q$ be positive integers.

$1^o$ For every $1 \leq c \leq \min \{ p,q \}$, there are exactly
\begin{equation}  \label{eqn:4.401}
2c{2p\choose p-c}{2q\choose q-c}
\end{equation}
partitions in $\ncb (p,q)$ that have connectivity equal to $c$.

$2^o$ There are exactly
\begin{equation}  \label{eqn:4.402}
{2p\choose p}{2q\choose q}
\end{equation}
partitions in $\ncb (p,q)$ that have connectivity equal to $0$.

$3^o$ The total number of partitions in $\ncb (p,q)$ is 
\begin{equation}  \label{eqn:4.403}
\left|NC^{(B)}(p,q)\right| =
\frac{p+q+pq}{p+q} \cdot {2p\choose p}{2q\choose q}.
\end{equation}
\end{thm}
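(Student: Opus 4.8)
The plan is to establish the three parts in order, obtaining each from Corollary~\ref{cor:4.3} by elementary binomial manipulations; the only genuinely delicate point will be the weighted summation needed for part~$3^o$.

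For part $1^o$ I would sum the count (\ref{eqn:4.301}) of Corollary~\ref{cor:4.3} over all admissible pairs $(e,i)$, whose ranges $0\le e\le p-c$, $0\le i\le q-c$ are recorded in (\ref{eqn:2.10}). Since (\ref{eqn:4.301}) factors into an $e$--part times an $i$--part, the double sum splits as a product:
\[
\sum_{e=0}^{p-c}\sum_{i=0}^{q-c}2c\binom{p}{e}\binom{p}{e+c}\binom{q}{i}\binom{q}{i+c}
=2c\Bigl(\sum_{e=0}^{p-c}\binom{p}{e}\binom{p}{e+c}\Bigr)\Bigl(\sum_{i=0}^{q-c}\binom{q}{i}\binom{q}{i+c}\Bigr).
\]
Each inner sum is a Chu--Vandermonde sum of the type (\ref{vanderm}), evaluating to $\binom{2p}{p-c}$ and $\binom{2q}{q-c}$ respectively, which gives (\ref{eqn:4.401}).

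For part $2^o$ the mechanism of Proposition~\ref{prop:4.2} is unavailable, since it presupposes $c\ge 1$; I would instead invoke the structural description of connectivity--$0$ partitions from the discussion of case (b) in Remark~\ref{rem:2.5}. There a partition $\pi\in\ncb(p,q)$ with $c=0$ is assembled from a pair $(\pi_{ext},\pi_{int})$ with $\pi_{ext}\in\ncb(p)$ and $\pi_{int}\in\ncb(q)$, the two zero-blocks (when both are present) being merged into a single zero-block of $\pi$. I would verify that this is a bijection: restricting the blocks of a connectivity--$0$ partition $\pi$ to $\{\pm1,\ldots,\pm p\}$ and to $\{\pm(p+1),\ldots,\pm(p+q)\}$ recovers the pair uniquely, the one point requiring care being that a zero-block of $\pi$ meeting both sides splits back into one zero-block on each side. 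Hence the number of connectivity--$0$ partitions equals $|\ncb(p)|\cdot|\ncb(q)|$, which by (\ref{eqn:3.021}) is $\binom{2p}{p}\binom{2q}{q}$, establishing (\ref{eqn:4.402}).

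For part $3^o$ I would add the counts of $1^o$ and $2^o$ over all connectivities, reducing the claim to evaluating a single weighted sum:
\[
\left|\ncb(p,q)\right|=\binom{2p}{p}\binom{2q}{q}+\sum_{c\ge 1}2c\binom{2p}{p-c}\binom{2q}{q-c}.
\]
This is the main obstacle. The naive idea of extending the range symmetrically and using Vandermonde fails, because $c\binom{2p}{p-c}\binom{2q}{q-c}$ is \emph{odd} under $c\mapsto -c$, so the symmetric sum vanishes and yields no information about the half-range sum we actually need. The trick I would use is to force a telescope. From the absorption identities $(p+c)\binom{2p}{p-c}=2p\binom{2p-1}{p-c}$ and $(p-c)\binom{2p}{p-c}=2p\binom{2p-1}{p-c-1}$ (and their $q$-analogues), combined with the elementary identity $(p+c)(q+c)-(p-c)(q-c)=2c(p+q)$, one obtains
\[
2c\binom{2p}{p-c}\binom{2q}{q-c}
=\frac{4pq}{p+q}\Bigl[\binom{2p-1}{p-c}\binom{2q-1}{q-c}-\binom{2p-1}{p-c-1}\binom{2q-1}{q-c-1}\Bigr].
\]
Setting $\phi(c):=\binom{2p-1}{p-c}\binom{2q-1}{q-c}$, the bracket is exactly $\phi(c)-\phi(c+1)$, so the sum telescopes to $\frac{4pq}{p+q}\phi(1)=\frac{4pq}{p+q}\binom{2p-1}{p-1}\binom{2q-1}{q-1}$. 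Using $\binom{2p-1}{p-1}=\tfrac12\binom{2p}{p}$ and $\binom{2q-1}{q-1}=\tfrac12\binom{2q}{q}$ collapses this to $\frac{pq}{p+q}\binom{2p}{p}\binom{2q}{q}$; adding the $c=0$ term $\binom{2p}{p}\binom{2q}{q}$ assembles the factor $\frac{p+q+pq}{p+q}$ and yields (\ref{eqn:4.403}).
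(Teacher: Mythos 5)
Your proposal is correct, and parts $1^o$ and $2^o$ coincide with the paper's argument: the paper likewise sums (\ref{eqn:4.301}) over $e$ and $i$, splits the double sum into a product, and applies (\ref{vanderm}) for part $1^o$, and for part $2^o$ it simply cites the direct-product structure from Remark~\ref{rem:2.5} together with (\ref{eqn:3.021}). The genuine divergence is in part $3^o$. The paper recognizes $\sum_{c\ge 1}2c\binom{2p}{p-c}\binom{2q}{q-c}$ as a hypergeometric sum, rewrites it as $2\binom{2p}{p-1}\binom{2q}{q-1}\cdot{}_3F_2\bigl(2,-(p-1),-(q-1);p+2,q+2;1\bigr)$, and evaluates the ${}_3F_2$ in closed form by Dixon's theorem, obtaining $\frac{(p+1)(q+1)}{2(p+q)}$. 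You instead manufacture a telescope: the absorption identities $(p+c)\binom{2p}{p-c}=2p\binom{2p-1}{p-c}$ and $(p-c)\binom{2p}{p-c}=2p\binom{2p-1}{p-c-1}$ (with their $q$-analogues), combined with $(p+c)(q+c)-(p-c)(q-c)=2c(p+q)$, exhibit each summand as $\frac{4pq}{p+q}\bigl(\phi(c)-\phi(c+1)\bigr)$ with $\phi(c)=\binom{2p-1}{p-c}\binom{2q-1}{q-c}$, and since $\phi$ vanishes beyond $c=\min\{p,q\}$ the sum collapses to $\frac{4pq}{p+q}\phi(1)=\frac{pq}{p+q}\binom{2p}{p}\binom{2q}{q}$. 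I checked each of these identities and the final assembly; they are all correct. Your route is entirely elementary and self-contained, trading the paper's appeal to a classical summation theorem for a short computation --- a worthwhile simplification, though it is ad hoc where the paper's hypergeometric viewpoint signals why a closed form should exist at all.
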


\begin{proof}
$1^o$ From Proposition~\ref{prop:4.2}, the number of partitions of 
connectivity $c$ in $\ncb (p,q)$ equals
\begin{align*}
2c \sum_{e,i\ge 0} {p \choose e}{p\choose e+c} 
                 \ {q\choose i}{q\choose i+c}                   
&= 2c \, \Bigl( \, \sum_{e=0}^{p-c} 
                {p \choose e}{p\choose e+c} \, \Bigr)  
  \,   \Bigl( \, \sum_{i=0}^{q-c} 
                {q \choose i}{q\choose i+c} \, \Bigr) \\
&= 2c {2p\choose p-c}{2q\choose q-c} 
\end{align*}
(where for the second equality we used the identity 
(\ref{vanderm})). 

$2^o$ As observed in Remark \ref{rem:2.5}, the partitions with 
connectivity 0 in $\ncb (p,q)$ are given by the  direct product
of $NC^{(B)}(p)$ with $NC^{(B)}(q)$; hence their number is
\[
\left|NC^{(B)}(p)\right| \cdot \left|NC^{(B)}(q)\right| =
{2p\choose p} \cdot {2q\choose q} \qquad
\ \mbox{ (by using (\ref{eqn:3.021})).}
\]

$3^o$ From the above it follows that 
\begin{equation}  \label{eqn:4.404}
\left|NC^{(B)}(p,q)\right| = {2p\choose p}{2q\choose q}
+\sum_{c\ge 1} 2c{2p\choose p-c}{2q\choose q-c}.
\end{equation}
In the summation over $c$ in~(\ref{eqn:4.404}), we observe that 
the ratio of two consecutive terms is a rational function of $c$,
hence we are dealing with a hypergeometric summation. Referring to 
the standard notations for hypergeometric series one sees, more
precisely, that 
\begin{equation}  \label{eqn:4.405}
\sum_{c\ge 1} 2c \, {2p\choose p-c}{2q\choose q-c}
= 2 \, {2p \choose p-1}{2q \choose q-1} \cdot
{ }_3F_2 \left(  \begin{array}{c}
2 , \ -(p-1), \ -(q-1)  \\
p+2, \ q+2 
\end{array}  ; 1 \right) .
\end{equation}
(For the precise definition of ${ }_3F_2$ see for instance 
formula (2.1.2) on page 62 of \cite{AAR99}.)

Now, it turns out that the special ${ }_3F_2$ series on the 
right-hand side of (\ref{eqn:4.405}) can be summed in closed form; 
this is by a theorem of Dixon (see formula (2.2.11) on page 72 of 
\cite{AAR99}), which gives us that
\begin{equation}  \label{eqn:4.406}
{ }_3F_2 \left(  \begin{array}{c}
2 , \ -(p-1), \ -(q-1)  \\
p+2, \ q+2 
\end{array}  ; 1 \right) \ = \ 
\frac{(p+1)(q+1)}{2(p+q)} .
\end{equation}
By substituting this expression into (\ref{eqn:4.405}), and 
then by plugging the result into (\ref{eqn:4.404}), we obtain the 
stated formula for the cardinality of $\ncb (p,q)$.
\end{proof}

{}From Corollary \ref{cor:4.3} one can also infer a formula for the
rank generating function of $\ncb (p,q)$.

\begin{thm}  \label{thm:4.5}
Let $p,q$ be positive integers and let $F(x)$ denote the rank 
generating function for $\ncb (p,q)$. Then
\begin{equation*}
F(x) = \sum_{i,j \geq 0} {p\choose i}^2{q\choose j}^2 x^{i+j} 
\ + \ \sum_{c \geq 1} \ \sum_{e,i\ge 0} \
2c \, {p\choose e}{p\choose e+c} 
\, {q\choose i}{q\choose i+c} x^{p+q-e-i-c}.
\end{equation*}
\end{thm}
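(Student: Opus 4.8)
The plan is to sort the partitions of $\ncb(p,q)$ by their connectivity, which directly mirrors the two sums in the statement. Writing $F(x)=\sum_{\pi} x^{\mathrm{rank}(\pi)}$, I would split $F(x)=F_0(x)+F_{\geq 1}(x)$, where $F_0$ collects the partitions of connectivity $0$ and $F_{\geq 1}$ those of connectivity $c\geq 1$. The inequalities in (\ref{eqn:2.10}) of Remark~\ref{rem:2.4} guarantee that every $\pi\in\ncb(p,q)$ lands in exactly one of these two classes, so the two pieces can be computed independently and added at the end. I expect $F_{\geq 1}$ to reproduce the second sum and $F_0$ the first.

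The connectivity $c\geq 1$ part should be essentially immediate from Corollary~\ref{cor:4.3} and the rank formula (\ref{eqn:2.7}). By (\ref{eqn:2.9}) a partition of connectivity $c>0$ has no inversion-invariant block, so its blocks occur in $c+e+i$ pairs (namely $c$ connecting, $e$ exterior, and $i$ interior pairs), giving $2(c+e+i)$ non-inversion-invariant blocks; plugging this into (\ref{eqn:2.7}) yields $\mathrm{rank}(\pi)=(p+q)-(c+e+i)$. Since Corollary~\ref{cor:4.3} counts exactly $2c\binom{p}{e}\binom{p}{e+c}\binom{q}{i}\binom{q}{i+c}$ such partitions for each admissible $(c,e,i)$, weighting $x^{p+q-c-e-i}$ by these counts and summing over $c\geq 1$ and $e,i\geq 0$ gives the second sum in the statement verbatim.

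For the connectivity $0$ part the crux is rank-additivity under the decomposition recalled in Remark~\ref{rem:2.5}: such a $\pi$ is assembled from a pair $(\pi_{ext},\pi_{int})\in\ncb(p)\times\ncb(q)$, and I would prove $\mathrm{rank}(\pi)=\mathrm{rank}(\pi_{ext})+\mathrm{rank}(\pi_{int})$. The reason is that (\ref{eqn:2.7}) counts only non-inversion-invariant blocks, and the non-inversion-invariant blocks of $\pi$ are exactly those of $\pi_{ext}$ together with those of $\pi_{int}$; the various possibilities for zero-blocks (neither, one, or both of $\pi_{ext},\pi_{int}$ carrying one) never alter this count, since merging two zero-blocks into one touches only the inversion-invariant part. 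Granting additivity and using that the correspondence $\pi\mapsto(\pi_{ext},\pi_{int})$ is a bijection onto $\ncb(p)\times\ncb(q)$ (whose cardinality is confirmed by part $2^o$ of Theorem~\ref{thm:4.4}), $F_0(x)$ factors as the product of the rank generating functions of $\ncb(p)$ and $\ncb(q)$, which by (\ref{eqn:3.02}) are $\sum_{i}\binom{p}{i}^2x^i$ and $\sum_{j}\binom{q}{j}^2x^j$; their product is precisely the first sum.

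I expect the main obstacle to be the careful justification of rank-additivity in the subcase where both $\pi_{ext}$ and $\pi_{int}$ carry a zero-block, since this is the only situation in which $\pi=\otilda(\tau)$ genuinely differs from $\Omega(\tau)$ and two inversion-invariant orbits get merged; one must verify that this merge leaves the non-inversion-invariant block count, and hence the rank via (\ref{eqn:2.7}), unchanged. Once that is settled, adding $F_0(x)$ and $F_{\geq 1}(x)$ yields the stated expression for $F(x)$.
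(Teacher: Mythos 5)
Your proposal is correct and follows essentially the same route as the paper: split by connectivity, obtain the $c\geq 1$ part from Corollary~\ref{cor:4.3} together with the rank formula (\ref{eqn:2.7}), and factor the connectivity-$0$ part as a product of the rank generating functions of $\ncb(p)$ and $\ncb(q)$ via the decomposition in Remark~\ref{rem:2.5} and formula (\ref{eqn:3.02}). Your extra care about rank-additivity when both $\pi_{ext}$ and $\pi_{int}$ carry zero-blocks is a point the paper treats as immediate, and your justification of it is sound.
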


\begin{proof}
The first summation on the right-hand side of the result
gives the contribution to $F(x)$ from partitions $\pi \in \ncb (p,q)$
that have connectivity equal to $0$. Indeed, we saw in 
Remark \ref{rem:2.5} how such a partition $\pi$ is obtained by 
putting together a partition $\pi_{ext} \in \ncb (p)$ and a 
partition $\pi_{int} \in \ncb (q)$; it is moreover immediate that
when this is done, the rank of $\pi$ in $\ncb (p,q)$ is the sum of 
the ranks of $\pi_{ext}$ and $\pi_{int}$ in $\ncb (p)$ and in 
$\ncb (q)$, respectively. Thus when summing over partitions 
$\pi \in \ncb (p,q)$ with connectivity equal to 0 we get
\begin{align*} 
\sum_{\pi} x^{\mathrm{rank} ( \pi )}
& = \Bigl( \ \sum_{\pi_{ext} \in \ncb (p)} \ 
x^{\mathrm{rank} ( \pi_{ext} )} \ \Bigr) \
\Bigl( \ \sum_{\pi_{int} \in \ncb (q)} \ 
x^{\mathrm{rank} ( \pi_{int} )} \ \Bigr)            \\
& = \Bigl( \ \sum_{i=0}^p 
{p\choose i}^2 x^i \ \Bigr) \
\Bigl( \ \sum_{j=0}^q 
{q\choose j}^2 x^j \ \Bigr)  \qquad \mbox{  (by (\ref{eqn:3.02})).}
\end{align*}

On the other hand, let us observe that if $\pi \in \ncb (p,q)$
has connectivity $c \geq 1$, has $e$ pairs of exterior blocks and
has $i$ pairs of internal blocks, then from (\ref{eqn:2.7}) it
follows that
\[
\mbox{rank} ( \pi ) = (p+q) - (c+e+i).
\]
Hence in view of Corollary \ref{cor:4.3}, the contribution to 
$F(x)$ from the partitions $\pi \in \ncb (p,q)$
that have connectivity different from 0 is given precisely by
the second summation on the right-hand side of the result.
\end{proof}

\begin{rem}  \label{rem:4.6}
It can be shown that the second summation on the right-hand side of 
Theorem~\ref{thm:4.5} can be reexpressed with only two summation indices
instead of three, in the form:
\begin{equation}  \label{eqn:4.601}
\frac{2pq}{p+q}\sum_{i,j\ge 1}
\left({p\choose i}{q\choose j-1}+{p\choose i-1}{q\choose j}\right)
{p-1\choose i-1}{q-1\choose j-1}x^{i+j-1}.
\end{equation} 
The proof of this fact is technical, and is omitted.
\end{rem}

\section{Zeta polynomial and M\"obius function for 
\boldmath{$\ncb (p,q)$} }
\setcounter{section}{5}

\setcounter{equation}{0}

In this section, we determine the zeta polynomial and M\"obius function 
for $NC^{(B)}(p,q)$. These follow immediately by extending the 
bijection given in Proposition~\ref{prop:4.2} to count multichains in 
$NC^{(B)}(p,q)$, similar to Theorem~3.2 of~\cite{E80} and 
Proposition~7 of~\cite{R97}.

\begin{prop} \label{modbij}
For $p,q\ge 1$ and $m\ge 2$, the bijection given in 
Proposition~\ref{prop:4.2} extends to a bijection between
\begin{equation}  \label{eqn:5.101}
\left\{ 
\begin{array}{rclcl}
                &     &            & \vline   & 1\le c,\quad 1 \leq d \leq 2c   \\
                &     &            & \vline   &                    \\
(c,d;L^E, R_1^E, & \ldots, & R_{m-1}^E;     & \vline   & L^E, R_1^E, 
           \ldots, R_{m-1}^E \subseteq \{ 1, \ldots , p \}         \\
        & L^I, & R_1^I, \ldots , R_{m-1}^I)  &
\vline  & \ \ \mbox{  } \vert L^E \vert = \vert R_1^E \vert 
          + \cdots + \vert R_{m-1}^E \vert + c                     \\
                &     &            & \vline   &                    \\
   &   &   &  \vline  & L^I, R_1^I, 
       \ldots, R_{m-1}^I \subseteq \{ p+1, \ldots , p+q \}         \\
   &   &   & \vline  & \ \ \mbox{  } \vert L^I \vert 
    = \vert R_1^I \vert + \cdots + \vert R_{m-1}^I \vert - c  
\end{array}  \right\}
\end{equation}
and the set of multichains 
$\pi_1 \le \cdots \le \pi_{m-1}$ in $\ncb (p,q)$, in which 
\begin{equation}  \label{eqn:5.102}
\mathrm{rank}(\pi_i)= p+q- \left( \vert R_i^E\vert+ \cdots 
+ \vert R_{m-1}^E\vert + \vert R_i^I\vert
+ \cdots + \vert R_{m-1}^I \vert \right) , \ \ 
1 \leq i \leq m-1,
\end{equation}
and at least one of the $\pi_i$ has positive connectivity.
\end{prop}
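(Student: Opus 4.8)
The plan is to extend the bijection of Proposition~\ref{prop:4.2} from partitions to multichains by iterating the parenthesis-insertion construction. Recall that a single partition $\pi$ of connectivity $c$ was encoded by parentheses drawn on the annular picture; the key insight is that a multichain $\pi_1 \le \cdots \le \pi_{m-1}$ refines this, since $\pi_1$ is the finest and each successive $\pi_{i+1}$ coarsens it by merging blocks. The plan is to use $m-1$ \emph{levels} of parentheses: the sets $R_1^E, \ldots, R_{m-1}^E$ (and analogously $R_1^I, \ldots, R_{m-1}^I$) record, at each level $i$, the right-parenthesis placements that encode the blocks created when passing from $\pi_{i+1}$ down to $\pi_i$. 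The single set $L^E$ (resp.\ $L^I$) records \emph{all} left parentheses across all levels, which is why its cardinality equals $\vert R_1^E\vert + \cdots + \vert R_{m-1}^E\vert + c$: each right parenthesis at any level is matched by a left parenthesis, with an excess of $2c$ unmatched left parentheses on the exterior corresponding to the $c$ pairs of connecting blocks (and dually, an excess of $2c$ right parentheses on the interior).

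First I would set up the forward map $(c,d;L^E,R_1^E,\ldots;L^I,R_1^I,\ldots)\mapsto(\pi_1\le\cdots\le\pi_{m-1})$. Inserting the left parentheses from $L^E$ and the level-$i$ right parentheses from $R_i^E$ into the string $1,\ldots,p,-1,\ldots,-p$ produces a labelled parenthesis string with a net excess of $2c$ left parentheses; as in Proposition~\ref{prop:4.2}, the Cycle Lemma of Remark~\ref{cycle} yields exactly $2c$ legal cyclic shifts, and the pair $(c,d)$ selects one of them, fixing $t_1$. The interior string $t_2$ is determined dually. To read off the chain, I would process the concatenation $t_1t_2$ by matching parentheses \emph{level by level}: the innermost (level-$m-1$) matched pairs give the finest merges, and reading the partition determined by parentheses of level $\ge i$ produces $\pi_i$. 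Because a level-$i$ matching is obtained from the level-$(i+1)$ matching by adjoining more right parentheses, each $\pi_i$ is a coarsening of $\pi_{i+1}$, so the nesting automatically yields a genuine multichain $\pi_1\le\cdots\le\pi_{m-1}$. The rank formula~(\ref{eqn:5.102}) then follows from~(\ref{eqn:2.7}): the number of blocks of $\pi_i$ that are not inversion-invariant is controlled by how many parentheses of level $\ge i$ have been placed, which is exactly $\vert R_i^E\vert+\cdots+\vert R_{m-1}^E\vert+\vert R_i^I\vert+\cdots+\vert R_{m-1}^I\vert$ on each side.

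For the inverse map, I would start from a multichain with at least one $\pi_i$ of positive connectivity and, using the canonical total order on external, internal, and connecting blocks established in part~B of the proof of Proposition~\ref{prop:4.2}, draw the left parenthesis for each block of $\pi_1$ (recording $L^E, L^I$) and assign the right parentheses to the level at which each block first appears in the chain, i.e.\ a block that becomes ``closed'' when passing from $\pi_{i+1}$ to $\pi_i$ contributes its right parenthesis to $R_i^E$ or $R_i^I$. The connectivity $c$ is the common positive connectivity forced by~(\ref{eqn:2.9}) (once any $\pi_i$ has $c>0$, no $\pi_j$ can have an inversion-invariant block, and a short argument shows the connectivity is constant along the chain), and $d$ is recovered exactly as in Proposition~\ref{prop:4.2} by the counterclockwise search on the interior circle for the first connecting block.

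The main obstacle I expect is verifying the cardinality bookkeeping precisely, namely that the constraint $\vert L^E\vert = \vert R_1^E\vert+\cdots+\vert R_{m-1}^E\vert+c$ corresponds correctly to the net excess of $2c$ left parentheses needed for the Cycle Lemma to produce exactly $2c$ legal shifts, and dually for the interior. One must also confirm that the level-by-level matching is \emph{well-defined}: that at each stage the parentheses of level $\ge i$ form a legal (non-crossing) system compatible with a type-$B$ annular partition, and that distinct legal shifts $d$ genuinely produce distinct chains. These are the same subtleties as in the single-partition case, so I would handle them by checking that restricting the whole construction to level $m-1$ alone recovers Proposition~\ref{prop:4.2} verbatim, and that adding lower levels only refines the finest partition $\pi_{m-1}$ downward to $\pi_1$ in a non-crossing manner; the inductive structure on $m$ reduces everything to the already-established base case.
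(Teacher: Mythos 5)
Your encoding is the same as the paper's --- one common pool of left parentheses, $m-1$ types of right parentheses, the Cycle Lemma to select the cyclic shifts, and reading $\pi_i$ from the parentheses of levels $\ge i$ --- and the forward map together with the rank bookkeeping is essentially right (modulo the slip that $\pi_i$ is a refinement, not a coarsening, of $\pi_{i+1}$). However, your description of the inverse map contains a genuine error: the claim that ``the connectivity is constant along the chain'' and that $c$ is recovered as this common value is false. Connectivity is neither constant nor monotone along a multichain --- already $\widehat{0}\le\rho$ with $\rho$ of positive connectivity is a legitimate multichain in the target set whose first term has connectivity $0$ --- and even when all the $\pi_i$ do happen to have equal connectivity, that value need not be $c$: in the paper's own worked example (Remark~\ref{egmodbij}) one has $c=2$ while $\pi_1$ and $\pi_2$ both have connectivity $1$. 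What $c$ actually records is half the number of parenthesis pairs of $t_1t_2$ whose left parenthesis lies in the exterior string and whose right parenthesis lies in the interior string; chain-theoretically, $2c$ is the number of \emph{distinct} connecting blocks occurring anywhere along the chain (a block persisting through several consecutive $\pi_i$ being counted once), equivalently $c$ is forced by the count $\vert L^E\vert-\bigl(\vert R_1^E\vert+\cdots+\vert R_{m-1}^E\vert\bigr)$ once the parentheses have been read off the blocks. With your recipe the inverse map returns the wrong $c$ on such chains, so the two maps you define are not mutually inverse.

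A second, related gap: you never verify that the forward map lands in the stated codomain, i.e.\ that at least one of the $\pi_i$ produced has positive connectivity. (Your implicit reason --- that all the $\pi_i$ share the connectivity $c>0$ --- rests on the false constancy claim; note that $\pi_1$ can be $\widehat{0}$.) The paper's argument here is short but necessary: $t_1t_2$ begins with a left parenthesis, sitting in the exterior part, which is matched with the terminal right parenthesis of $t_2$, say of type $)^{\ell_k}$; for the indices $i$ with $\ell_{k-1}<i\le\ell_k$ this pair survives in the reading of $\pi_i$ and encloses both an exterior and an interior element, so that particular $\pi_i$ has a connecting block and hence positive connectivity. Some such argument must be supplied to complete the proof.
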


\begin{proof}
This proof is to a good extent a repetition of the one 
given earlier for Proposition \ref{prop:4.2} (which corresponds 
to the case $m=2$ of the present proposition). Because of this, 
we will only give an outline of the argument, with emphasis on 
the points that are specific to the situation at hand.

Given a tuple 
$(c,d;L^E,R_1^E, \ldots, R_{m-1}^E; L^I,R_1^I, \ldots, R_{m-1}^I)$
as in (\ref{eqn:5.101}), insert left and right parentheses into 
the string
$$1,\ldots ,p,-1,\ldots ,-p,$$
with $m-1$ types of right parentheses $)^k$ for $k=1,\ldots, m-1$,
as follows: place a left parenthesis before each occurrence of $j$ 
and $-j$, for each value $j$ in $L^E$; for $k=1,\ldots ,m-1$, place 
a right parenthesis of type $)^k$ after each occurrence of $j$ 
and $-j$, for each value $j$ in $R^E_k$. (If $j$ occurs in both 
$R^E_a$ and $R^E_b$, for $a<b$,
then place the corresponding $)^b$ to the right of the $)^a$.)
In the resulting string of numbers and parentheses there are $2c$ 
more left parentheses than right parentheses, so the Cycle Lemma 
in Remark~\ref{cycle} implies that there are $2c$ cyclic shifts of 
the string beginning with a left parenthesis such that the 
subsequence consisting of parentheses only is legal from the left.
Order these $2c$ cyclic shifts in the canonical way (by the same 
method as in the proof of Proposition \ref{prop:4.2}), and choose 
the $d$th of them to give the string $t_1$.

Similarly, insert left and right parentheses into the string
$$p+1,\ldots ,p+q,-(p+1), \ldots ,-(p+q)$$
by placing a left parenthesis before each occurrence of $j$ and $-j$,
for each value $j$ in $L^I$; for $k=1,\ldots ,m-1$, place a right 
parenthesis of type $)^k$ after each occurrence of $j$ and $-j$, for
each value $j$ in $R^I_k$. In the resulting string of numbers and 
parentheses there are $2c$ more right parentheses than left 
parentheses, so the Cycle Lemma in Remark~\ref{cycle}
implies that there are $2c$ cyclic shifts of the sequence ending 
with a right parenthesis such that the subsequence consisting of 
parentheses  only is legal from the right. Let $t_2$ be
the canonical choice (found in the same way as in the proof of 
Proposition \ref{prop:4.2}) from among these $2c$ cyclic shifts.

Now, from the string $t_1t_2$, we create a partition $\pi_1$ in 
$NC^{(B)}(p,q)$ in the following way: the numbers inside a 
lowest-level pair of matching parentheses form a block of $\pi_1$; 
remove these numbers and this pair of parentheses from the
string, and iterate until the string is empty. Then for 
$j=2,\ldots ,m-1$, remove the right parentheses of types 
$)^1, \dots ,)^{j-1}$ from  $t_1t_2$, together with the left 
parentheses that pair with them, and read the remaining string as 
above to obtain $\pi_j$. This produces the multichain 
$\pi_1\le \ldots\le\pi_{m-1}$ in $\ncb (p,q)$, and gives a 
bijection with the required properties.

To see that at least one of
the $\pi_i$ has positive connectivity, note that the string $t_1t_2$ starts
with $(a\ldots$ and ends with $\ldots b)^{\ell_1}\cdots )^{\ell_k}$, where
 $a\in\{1,\ldots ,p,-1,\ldots ,-p\}$, $b\in\{p+1,\ldots ,p+q,-p-1,\ldots ,-p-q\}$,
 $k\ge 1$, and $1\le\ell_1<\cdots <\ell_k\le m-1$. Then, in $t_1t_2$, the
initial left parenthesis $($ is paired
with the terminal right parenthesis $)^{\ell_k}$, and therefore $\pi_i$ must
have positive connectivity when $\ell_{k-1}<i \le\ell_k$, since for these
choices of $i$, elements $a$ and $b$ must appear in the same block of $\pi_i$ in 
the construction above.
\end{proof}

\begin{rem}\label{egmodbij}
As a concrete example for how Proposition~\ref{modbij} works, 
suppose we have $p=6,q=3,m=3$, with $c=2$, $d=1$, 
$L^E=\{ 1,2,3,5,6\}$, $R_1^E=\{ 1,3\}$, $R_2^E=\{ 3\}$, 
and $L^I = \{ 8,9 \}$, $R_1^I=\{ 7,8,9\}$, $R_2^I=\{ 7\}$.
By inserting parentheses in $1, \ldots , 6, -1, \ldots , -6$ we 
obtain the string
$$(1)^1(2(3)^1)^24(5 (6(-1)^1(-2(-3)^1)^2-4(-5(-6 ,$$
which has $4$ cyclic shifts that we might consider.
Since we are given that $d=1$, the cyclic shift that we select
is the one that begins with ``$(5$'', thus getting
$$t_1= (5 (6(-1)^1(-2(-3)^1)^2-4(-5(-6(1)^1(2(3)^1)^24.$$
Similarly, we obtain
$$t_2 =( -8)^1( -9)^1 7)^1)^2(8)^1(9)^1 -7)^1)^2,$$
and from the string $t_1t_2$, we obtain the partitions
$$\pi_1= \Bigl\{ \, \{ 4,-6,7 \}, \{ -4,6,-7 \} \, \Bigr\} \cup 
\Bigl\{ \ \{ i \} \ \vline \ 1 \leq |i| \leq 9, \
|i| \neq 4,6,7 \Bigr\} , $$
$$\pi_2 = \Bigl\{ \, \{ 1,4,-5,-6,7,-8,-9 \}, \{ -1,-4,5,6,-7,8,9 \},
                     \{ 2,3\}, \{ -2,-3\} \, \Bigr\}.$$
Note that $\pi_1\le\pi_2$, and that both $\pi_1$ and $\pi_2$ have positive
connectivity (both have conncetivity equal to $1$).
\end{rem}

As an immediate enumerative consequence of Proposition~\ref{modbij}, 
we obtain the zeta polynomial for $NC^{(B)}(p,q)$.

\begin{thm} \label{thm:5.4}
Let $p,q$ be positive integers.

$1^o$ The zeta polynomial of $\ncb (p,q)$ is given by the 
formula:
\begin{equation}  \label{eqn:5.401}
Z_{\ncb (p,q)} (m)= {mp\choose p}{mq\choose q}+
\sum_{c=1}^p 2c{mp\choose p-c}{mq\choose q+c}.
\end{equation}

$2^o$ The number of maximal chains in $NC^{(B)}(p,q)$ is 
equal to
\begin{equation}  \label{eqn:5.402}
{p+q\choose p} p^p q^q
+\sum_{c\ge 1} 2c{p+q\choose p-c}p^{p-c} q^{q+c}.
\end{equation}
\end{thm}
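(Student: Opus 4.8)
The plan is to read off both parts of Theorem~\ref{thm:5.4} as direct enumerative consequences of the multichain bijection in Proposition~\ref{modbij}, in the same spirit that Theorem~\ref{thm:4.4} was derived from Proposition~\ref{prop:4.2}. First I would recall that the zeta polynomial $Z_{\ncb(p,q)}(m)$ counts, by definition, the multichains $\pi_1 \le \cdots \le \pi_{m-1}$ of length $m-1$ in the poset (with repetitions allowed), so the task for part $1^o$ is simply to enumerate all such multichains. I would split this count according to whether all the $\pi_i$ have connectivity $0$ or at least one $\pi_i$ has positive connectivity, since Proposition~\ref{modbij} counts exactly the latter family.

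For the connectivity-$0$ contribution I would argue, as in Remark~\ref{rem:2.5} and the proof of Theorem~\ref{thm:4.5}, that a multichain all of whose terms have connectivity $0$ factors as a pair of multichains, one in $\ncb(p)$ and one in $\ncb(q)$; hence its count is the product $Z_{\ncb(p)}(m) \cdot Z_{\ncb(q)}(m)$. Using the known zeta polynomial of $\ncb(n)$ from Reiner's paper \cite{R97}, namely $Z_{\ncb(n)}(m) = \binom{mn}{n}$, this product is exactly $\binom{mp}{p}\binom{mq}{q}$, which is the first term on the right-hand side of (\ref{eqn:5.401}). For the positive-connectivity contribution I would take cardinalities in the bijection of Proposition~\ref{modbij}: fixing $c$ with $1 \le c \le p$ and $d$ with $1 \le d \le 2c$, and summing freely over the admissible subset sizes, the count of tuples in (\ref{eqn:5.101}) factors into an ``exterior'' sum and an ``interior'' sum. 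Each of these is a multinomial-style summation that collapses by the Chu--Vandermonde identity (\ref{vanderm}) (applied in its iterated/multivariate form), giving $\binom{mp}{p-c}$ for the exterior part and $\binom{mq}{q+c}$ for the interior part; the factor $2c$ comes from the choices of $d$. Summing over $c$ from $1$ to $p$ then yields the displayed sum in (\ref{eqn:5.401}).

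For part $2^o$ I would obtain the number of maximal chains as a limiting/extremal case of the same counting. The number of maximal chains is recovered from the multichain count by restricting to \emph{strictly} increasing chains of full length, equivalently by a standard extraction: maximal chains correspond to multichains in which every rank value between $0$ and $p+q$ is hit exactly once. Concretely, I would specialize the subset-size data in Proposition~\ref{modbij} so that each $R_k^E, R_k^I$ contributes a single rank step; the binomial coefficients $\binom{mp}{\cdot}$ and $\binom{mq}{\cdot}$ then degenerate, via the identity $\lim$-type extraction (or directly by the leading-term behaviour $\binom{mn}{n-c} \sim (mn)^{\,n-c}/\cdots$), into the falling-factorial counts that produce $p^p q^q$ and $p^{p-c} q^{q+c}$ together with the multinomial prefactor $\binom{p+q}{p}$ governing how the exterior and interior rank steps interleave along the chain. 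This reproduces (\ref{eqn:5.402}).

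The main obstacle I anticipate is the bookkeeping in the positive-connectivity summation: verifying that the multi-index sum over the sizes of $R_1^E, \ldots, R_{m-1}^E$ (subject to the constraint $|L^E| = |R_1^E| + \cdots + |R_{m-1}^E| + c$ in (\ref{eqn:5.101})) really telescopes to the single binomial $\binom{mp}{p-c}$, and similarly on the interior side to $\binom{mq}{q+c}$. This requires the correct iterated form of Chu--Vandermonde rather than the two-term version stated in (\ref{vanderm}), and care that the sign conventions in the interior constraint $|L^I| = |R_1^I| + \cdots + |R_{m-1}^I| - c$ produce $q+c$ (not $q-c$) in the upper index. The passage to maximal chains in part $2^o$ is delicate for a parallel reason: one must identify precisely which degeneration of the size-data corresponds to a strictly increasing full-length chain, and confirm that the resulting limits of the binomial coefficients give the stated powers $p^{p-c} q^{q+c}$ with the right combinatorial prefactor.
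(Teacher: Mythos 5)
Your proposal follows essentially the same route as the paper: part $1^o$ is obtained by splitting multichains according to whether all terms have connectivity $0$ (giving $Z_{\ncb(p)}(m)\cdot Z_{\ncb(q)}(m)=\binom{mp}{p}\binom{mq}{q}$) or some term has positive connectivity (taking cardinalities in Proposition~\ref{modbij} and collapsing the multi-index sum via the iterated Chu--Vandermonde identity to $\sum_c 2c\binom{mp}{p-c}\binom{mq}{q+c}$), exactly as in the paper. For part $2^o$ the paper simply invokes the standard fact that the number of maximal chains equals $d!$ times the leading coefficient of $Z_P(m)$ with $d=p+q$ (Stanley, Proposition 3.11.1(a)), which is the cleaner of the two mechanisms you sketch and yields (\ref{eqn:5.402}) directly.
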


\begin{proof}
$1^o$ The zeta polynomial $Z_P$ of a partially ordered set $P$
is defined via the condition that for every $m \geq 2$, the 
value $Z_P (m)$ is equal to the number of multichains 
$\pi_1\le \cdots\le\pi_{m-1}$ in $P$ (see Section~3.11 of 
\cite{Sta97}). From Proposition~\ref{modbij}, the number of such 
multichains in which at least one of the $\pi_{i}$ has positive connectivity 
is given by
\begin{equation}  \label{eqn:5.403}
\sum_{c\ge 1}2c\sum_{\substack{a_1,\ldots,a_{m-1},\\ b_1,\ldots ,b_{m-1}\ge 0}}
{p\choose a_1+\ldots +a_{m-1}+c}{q\choose b_1+\ldots +b_{m-1}-c}
\prod_{j=1}^{m-1}{p\choose a_j}{q\choose b_j}.
\end{equation}
The summation in~(\ref{eqn:5.403}) can be simplified 
if one invokes a well-known multinomial formula (which 
incidentally is a generalization of the binomial identity 
(\ref{vanderm}) used in the preceding sections), stating that
for any integers $A_1, \ldots , A_k, A_{k+1} \geq 0$ and 
$0 \leq b \leq A_{k+1}$ we have
\begin{equation}  \label{hypersum}
\sum_{a_1,\ldots ,a_k\ge 0}{A_1\choose a_1}\cdots {A_k\choose a_k}
{A_{k+1}\choose a_1+\cdots +a_k+b} = 
{A_1+\cdots +A_{k+1}\choose A_{k+1}-b}.
\end{equation}
By applying (\ref{hypersum}) to the inner summation, 
we find that~(\ref{eqn:5.403}) simplifies to
\begin{equation}  \label{eqn:5.405}
\sum_{c\ge 1}2c{mp\choose p-c}{mq\choose q+c}.
\end{equation}

On the other hand, we also have a simple formula for multichains
$\pi_1 \leq \cdots \leq \pi_{m-1}$ in which all of the $\pi_{i}$ have
connectivity equal to $0$. Indeed, these are simply multichains
in the direct product of $NC^{(B)}(p)$ with $NC^{(B)}(q)$, and 
thus the number of these is given by
\begin{equation}  \label{eqn:5.406}
Z_{\ncb (p)} (m) \cdot Z_{\ncb (q)} (m)
={mp\choose p}{mq\choose q},
\end{equation}
from Proposition~7 of~\cite{R97}. 

The expression for $Z_{\ncb (p,q)} (m)$ now follows
by adding together~(\ref{eqn:5.405}) and~(\ref{eqn:5.406}).

$2^o$ For any partially ordered set $P$, the number of
maximal chains is given by $d\, !$ times the coefficient of $m^d$ in 
$Z_P(m)$, where the zeta polynomial $Z_P(m)$ has degree $d$ (see
Proposition 3.11.1(a) of~\cite{Sta97}).
{}From part $1^o$ of this result, we have $d=p+q$ in this case, and the 
result follows from part $1^o$.
\end{proof}

\begin{cor} \label{cor:5.5}
For $p,q\ge 1$, $NC^{(B)}(p,q)$ has M\"obius function
$$\mu_{NC^{(B)}(p,q)}( \, \widehat{0},\widehat{1} \, )
=(-1)^{p+q} \left( {2p-1\choose p}{2q-1\choose q}+
 \sum_{c=1}^p 2c{2p-c-1\choose p-1}{2q+c-1\choose q-1}\right).$$
\end{cor}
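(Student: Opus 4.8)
The plan is to read off $\mu_{\ncb (p,q)}( \, \widehat{0}, \widehat{1} \, )$ directly from the zeta polynomial computed in Theorem~\ref{thm:5.4}, by invoking the standard relationship $\mu_P( \, \widehat{0}, \widehat{1} \, ) = Z_P(-1)$, valid for any finite poset $P$ with $\widehat{0}$ and $\widehat{1}$ (see Section~3.11 of \cite{Sta97}). The reason this holds is that, in the incidence algebra of $P$, the value $Z_P(m)$ equals $\zeta^m ( \, \widehat{0}, \widehat{1} \, )$, where $\zeta$ is the zeta function: a multichain $\pi_1 \le \cdots \le \pi_{m-1}$ in $P$ is the same as a multichain $\widehat{0} \le \pi_1 \le \cdots \le \pi_{m-1} \le \widehat{1}$ of length $m$ from $\widehat{0}$ to $\widehat{1}$. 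Since $Z_P$ is a polynomial in $m$ and $\mu = \zeta^{-1}$ in the incidence algebra, evaluating at $m = -1$ produces $\mu( \, \widehat{0}, \widehat{1} \, )$. Before substituting I would note that the right-hand side of (\ref{eqn:5.401}) is genuinely a polynomial in $m$: each of $\binom{mp}{p}$, $\binom{mq}{q}$, $\binom{mp}{p-c}$, $\binom{mq}{q+c}$ is a polynomial in $m$ of degree $p$ or $q$, so evaluation at $m=-1$ is legitimate.

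With that in hand, the computation is a matter of setting $m=-1$ in (\ref{eqn:5.401}) and simplifying binomial coefficients. The key tool is the upper-negation identity $\binom{-a}{b} = (-1)^b \binom{a+b-1}{b}$, applied to each of the four families of coefficients. For the leading term this gives $\binom{-p}{p} = (-1)^p \binom{2p-1}{p}$ and $\binom{-q}{q} = (-1)^q \binom{2q-1}{q}$; for the terms under the sum it gives $\binom{-p}{p-c} = (-1)^{p-c}\binom{2p-c-1}{p-c}$ and $\binom{-q}{q+c} = (-1)^{q+c}\binom{2q+c-1}{q+c}$.

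The remaining bookkeeping is twofold. First, the sign attached to the $c$-th summand is $(-1)^{p-c}(-1)^{q+c} = (-1)^{p+q}$, which is independent of $c$ and agrees with the sign of the leading term; hence the global factor $(-1)^{p+q}$ pulls cleanly out of the entire expression. Second, I would rewrite the inner binomials in the form demanded by the statement, using the symmetry $\binom{N}{k} = \binom{N}{N-k}$, namely $\binom{2p-c-1}{p-c} = \binom{2p-c-1}{p-1}$ and $\binom{2q+c-1}{q+c} = \binom{2q+c-1}{q-1}$. Collecting everything yields exactly the stated formula.

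I do not expect a genuine obstacle here, since the argument is a direct evaluation. The only points requiring care are the conventions: one must confirm that the paper's indexing of multichains ($\pi_1 \le \cdots \le \pi_{m-1}$) is precisely the one for which $Z_P(-1) = \mu( \, \widehat{0}, \widehat{1} \, )$. This can be sanity-checked against small values predicted by (\ref{eqn:5.401}) — for instance $Z_{\ncb (p,q)}(1) = 1$, and $Z_{\ncb (p,q)}(2) = \vert \ncb (p,q) \vert$, the latter matching Theorem~\ref{thm:4.4}.3 once one uses $\binom{2q}{q+c} = \binom{2q}{q-c}$. Beyond that, the main thing to watch is keeping the re-indexing of binomial coefficients straight through the negation and symmetry steps.
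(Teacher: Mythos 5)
Your proposal is correct and follows exactly the paper's route: the paper's proof of Corollary~\ref{cor:5.5} is precisely the one-line appeal to $\mu_P(\,\widehat{0},\widehat{1}\,)=Z_P(-1)$ (Proposition~3.11.1(c) of \cite{Sta97}) applied to the zeta polynomial of Theorem~\ref{thm:5.4}. Your added details --- the upper-negation identity, the symmetry $\binom{N}{k}=\binom{N}{N-k}$, and the sign bookkeeping --- correctly carry out the substitution the paper leaves implicit.
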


\begin{proof}
This follows immediately from Theorem \ref{thm:5.4}, using the fact
that  for any partially ordered set $P$ one has 
$\mu_P( \, \widehat{0},\widehat{1} \, )=Z_P(-1)$ 
(see Proposition~3.11.1(c) of~\cite{Sta97}).
\end{proof}

\begin{rem} \label{rem:5.6}
It is straightforward to specialize Corollary~\ref{cor:5.5} to the 
case $p=n-1$, $q=1$, either by directly evaluating the summation 
or by setting $p=1$, $q=n-1$ and using the symmetry between $p$ 
and $q$. Using either of these means, we 
obtain the expression given in Theorem~\ref{thm:3.6}. Note further 
that we can specialize Theorem~\ref{thm:5.4} itself in the
same way, to obtain that for every $n \geq 2$, the zeta polynomial
of $\ncb (n-1,1)$ is given by the formula
\begin{equation}  \label{eqn:5.601}
Z_{ \ncb (n-1,1)} (m)
= \Bigl( 2 + \frac{mn}{(m-1)(n-1)} \Bigr) \cdot
{m(n-1)\choose n}.
\end{equation}
\end{rem}

\section{ The case  \boldmath{$\ncb (n_1,\ldots ,n_k)$}}\label{sec:6}
\setcounter{section}{6}

\setcounter{equation}{0}

In this section we consider the extension to multiannular 
non-crossing partitions of type $B$. The main point of the 
section is to establish that, due to a topological restriction 
called ``the genus inequality'',
the general multiannular case reduces in fact to the cases
of non-crossing partitions of type $B$ in a disc or an annulus.

We find it convenient to introduce the following notation.

\begin{notn} \label{notn:6.1}

$1^o$ For $\tau \in B_n$, let $\#(\tau)$
denote the number of orbits of $\tau$, and let $ii(\tau)$ denote 
the number of inversion-invariant orbits of $\tau$.

$2^o$ For $\tau,\sigma\in B_n$, let $\#(\tau, \sigma)$ denote
the number of orbits for the action of the subgroup of $B_n$ 
generated by $\{\tau, \sigma\}$.
\end{notn}

\begin{rem}\label{rem:6.2}
$1^o$ In terms of the notations introduced above, the formula
(\ref{eqn:2.2}) for the length
of an element $\tau \in B_n$ is now written in the form
\begin{equation}\label{ellii}
\ell_B (\tau)=n-\tfrac{1}{2} \bigl( \, \#(\tau)
- ii(\tau) \, \bigr) .
\end{equation}

$2^o$ The genus inequality mentioned at the beginning of the 
section is an inequality that arises in multiplying arbitrary 
permutations, and is stated as follows: for any permutations 
$\tau,\sigma$ of a finite set $X$, we have that 
\begin{equation}\label{genusiii}
\#(\sigma)+\#(\tau)+\#(\tau^{-1}\sigma)\leq 
\left| X\right| +2\cdot \#(\tau,\sigma).
\end{equation}
The name ``genus inequality'' comes from the fact that the 
difference of the right-hand side and left-hand side of 
(\ref{genusiii}) is a non-negative even integer $2g$, where 
$g$ is the genus for a certain orientable surface constructed 
from $\tau$ and $\sigma$ (see, \textit{e.g.}, Proposition 1.5.3 
of~\cite{LZ04}).
\end{rem}

\begin{defn}
Throughout the rest of the section we let $\gamma$ be a fixed 
element of $B_n$ with $\# (\gamma)=ii(\gamma )=k$, in which the 
$k$ orbits of $\gamma$ have sizes $2n_1,\ldots ,2n_k$, where 
$n_1,\ldots ,n_k\ge 1$ and $n_1+\cdots +n_k=n$.
Analogously to~(\ref{eqn:2.5}) and~(\ref{eqn:2.4}), we define
\begin{equation}  \label{eqn:5.2}
\sncb (n_1,\ldots ,n_k) :=  \{ \tau \in B_n \mid \tau \leq \gamma \} ,
\end{equation}
where the partial order $\leq$ is defined in~(\ref{eqn:2.3}).
We then define $\ncb (n_1,\ldots ,n_k)$ as in~(\ref{eqn:2.6}), by
putting
\begin{equation}
\ncb (n_1,\ldots ,n_k) := 
\{ \otilda ( \tau ) \mid \tau \in \sncb (n_1,\ldots ,n_k) \} ,
\end{equation}
where the adjusted orbit map $\otilda$ is as in Definition 
\ref{def:2.2}.
\end{defn}

\begin{prop} \label{multisize}
Let $\tau$ be a permutation in $\sncb (n_1,\ldots ,n_k)$. We denote
$\#(\tau,\gamma) =: m$ ($1 \leq m \le k$). Let $Y_1, \ldots , Y_m$ 
denote the orbits counted by $\# ( \tau , \gamma )$,
with $|Y_j|=2y_j$, $j=1,\ldots ,m$. Moreover, for every $1 \leq j \leq m$, let
us denote the restrictions of $\tau$ and $\gamma$ to $Y_j$ 
by $\tau_j$ and $\gamma_j$, respectively. We then have
\begin{equation}
\#(\gamma_j)=ii(\gamma_j)\le 2, \qquad j=1,\ldots ,m.
\end{equation}
\end{prop}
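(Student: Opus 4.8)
The plan is to reduce the global statement to a block-by-block analysis on the orbits $Y_1,\ldots ,Y_m$ of the group $\langle \tau,\gamma\rangle$, and then on each block to combine the genus inequality~(\ref{genusiii}) with the length formula~(\ref{ellii}). First I would observe that each $Y_j$ is inversion-invariant: since $\gamma\in\langle\tau,\gamma\rangle$, the set $Y_j$ is $\gamma$-invariant, hence a union of orbits of $\gamma$; as every orbit of $\gamma$ satisfies $A=-A$ by the hypothesis $\#(\gamma)=ii(\gamma)=k$, the union $Y_j$ also satisfies $Y_j=-Y_j$. Because $\tau,\gamma\in B_n$ satisfy $\tau(-x)=-\tau(x)$ and $\gamma(-x)=-\gamma(x)$, the restrictions $\tau_j,\gamma_j$ inherit this property and may be regarded as elements of a hyperoctahedral group $B_{y_j}$. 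In particular every orbit of $\gamma_j$ is one of the inversion-invariant orbits of $\gamma$, which immediately gives the equality $\#(\gamma_j)=ii(\gamma_j)$ and reduces the proposition to proving the bound $\#(\gamma_j)\le 2$.

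Next I would localize the order relation $\tau\le\gamma$. Applying the length formula~(\ref{ellii}) on the block $Y_j$ (of half-size $y_j$) and summing over $j$ shows that $\ell_B$ is additive, $\ell_B(\tau)=\sum_j\ell_B(\tau_j)$, and likewise for $\gamma$ and for $\tau^{-1}\gamma$; here one uses $\sum_j y_j=n$, the facts that each orbit of $\tau$, of $\gamma$, and of $\tau^{-1}\gamma$ lies in a single $Y_j$ (so that the orbit counts and inversion-invariant orbit counts add up over $j$), and the identity $(\tau^{-1}\gamma)\vert_{Y_j}=\tau_j^{-1}\gamma_j$ coming from the $\tau$- and $\gamma$-invariance of $Y_j$. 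The triangle inequality~(\ref{triangineq}) holds on each block, $\ell_B(\gamma_j)\le\ell_B(\tau_j)+\ell_B(\tau_j^{-1}\gamma_j)$; summing these and comparing with the global equality $\ell_B(\gamma)=\ell_B(\tau)+\ell_B(\tau^{-1}\gamma)$ (valid because $\tau\le\gamma$) forces equality in every block, i.e. $\tau_j\le\gamma_j$ in $B_{y_j}$ for each $j$.

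Finally I would run the counting argument on a single block. Since all orbits of $\gamma_j$ are inversion-invariant, (\ref{ellii}) gives $\ell_B(\gamma_j)=y_j$, so the block equality $\tau_j\le\gamma_j$ reads
\[
\bigl(\#(\tau_j)-ii(\tau_j)\bigr)+\bigl(\#(\tau_j^{-1}\gamma_j)-ii(\tau_j^{-1}\gamma_j)\bigr)=2y_j .
\]
On the other hand, $Y_j$ is a single orbit of $\langle\tau,\gamma\rangle$, so the restricted group $\langle\tau_j,\gamma_j\rangle$ acts transitively and $\#(\tau_j,\gamma_j)=1$; the genus inequality~(\ref{genusiii}) on the set $Y_j$ of size $2y_j$ then gives
\[
\#(\gamma_j)+\#(\tau_j)+\#(\tau_j^{-1}\gamma_j)\le 2y_j+2 .
\]
Substituting the previous identity to eliminate $\#(\tau_j)+\#(\tau_j^{-1}\gamma_j)$ yields $\#(\gamma_j)+ii(\tau_j)+ii(\tau_j^{-1}\gamma_j)\le 2$, and since the two $ii$-terms are non-negative we conclude $\#(\gamma_j)=ii(\gamma_j)\le 2$.

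The main obstacle is the localization step rather than the final arithmetic: one must verify carefully that the order relation $\tau\le\gamma$, the length formula, and the genus inequality all descend to the invariant subsets $Y_j$. The additivity of $\ell_B$ over the blocks $Y_j$ (which propagates the global equality $\ell_B(\gamma)=\ell_B(\tau)+\ell_B(\tau^{-1}\gamma)$ down to each block) and the transitivity giving $\#(\tau_j,\gamma_j)=1$ are the technical keys; once these are in place, the combination of the two displayed relations is immediate.
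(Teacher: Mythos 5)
Your proof is correct and follows essentially the same route as the paper: localize the order relation $\tau\le\gamma$ to each orbit $Y_j$ of $\langle\tau,\gamma\rangle$ via additivity of $\ell_B$ and the triangle inequality, rewrite the resulting block equality using the length formula~(\ref{ellii}), and combine it with the genus inequality~(\ref{genusiii}) applied with $\#(\tau_j,\gamma_j)=1$ to get $ii(\gamma_j)+ii(\tau_j)+ii(\tau_j^{-1}\gamma_j)\le 2$. You spell out the localization and the identity $\#(\gamma_j)=ii(\gamma_j)$ in a bit more detail than the paper does, but the argument is the same.
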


\begin{proof}
Note that $Y_j=X_j\cup-X_j$, for $j=1,\ldots ,m$, where
$\{ X_1,\ldots ,X_m\}$ is a partition of $\{ 1,\ldots ,n\}$ into 
nonempty subsets, with $|X_j| = y_j$ for $1 \leq j \leq m$.
Now, the triangle inequality~(\ref{triangineq}) gives
\begin{equation*}
\ell_B ( \gamma_j )  \leq\ell_B ( \tau_j ) + 
\ell_B ( \tau_j^{-1} \gamma_j ),\qquad j=1,\ldots ,m.
\end{equation*}
Then these inequalities, together with~(\ref{eqn:5.2}),~(\ref{eqn:2.3})
and the facts that 
$\ell_B(\gamma)$ $=\ell_B(\gamma_1)+\cdots +\ell_B(\gamma_m)$,
$\ell_B(\tau)$ $=\ell_B(\tau_1)+\cdots +\ell_B(\tau_m)$, give
\begin{equation}\label{pomparts}
\tau\in\sncb (n_1,\ldots ,n_k)\Rightarrow \ell_B ( \gamma_j ) 
= \ell_B ( \tau_j ) + \ell_B ( \tau_j^{-1} \gamma_j ),
\qquad j=1,\ldots ,m.
\end{equation}
But, from~(\ref{ellii}), rearranging the equation 
in~(\ref{pomparts}) and
using the fact that $\#(\gamma_j)=ii(\gamma_j)$, we obtain
\begin{equation}\label{poii}
\#( \gamma_j )+\#(\tau_j)+\#(\tau_j^{-1}\gamma_j) =
2y_j+ii(\gamma_j)+ii( \tau_j ) + ii( \tau_j^{-1} \gamma_j ),
\qquad j=1,\ldots ,m.
\end{equation}
On the other hand, the genus inequality~(\ref{genusiii}) implies that
\begin{equation}\label{genusii}
\#(\gamma_j)+\#(\tau_j)+\#(\tau_j^{-1}\gamma_j)\leq 2y_j+2,
\end{equation}
since $\#(\tau_j,\gamma_j)=1$. Thus~(\ref{poii}) and~(\ref{genusii}), 
together, imply that
\begin{equation*}
ii(\gamma_j)+ii( \tau_j ) + ii( \tau_j^{-1} \gamma_j )\le 2,
\end{equation*}
and, in particular, that $ii(\gamma_j)\le 2$, as required.
\end{proof}

\begin{rem}
When re-phrased in terms of partitions, Proposition~\ref{multisize} 
says that every partition $\pi \in \ncb (n_1,\ldots ,$ $n_k)$ splits
the $k$ orbits of $\gamma$ into groups of cardinality 1 or 2; thus
$\pi$ is obtained by putting together several separate ``pieces'',
where each piece is either a non-crossing partition of type $B$ in 
a disc, or a non-crossing partition of type $B$ in an annulus -- 
precisely the cases  that were considered earlier in the paper. 
Due to this phenomenon, the enumerative properties of 
$\ncb (n_1, \ldots , n_k)$ are quickly reduced to what we know 
from the disc and the annular cases, where one also has to do a 
suitable summation over partial matchings for the $k$ orbits 
of $\gamma$. For illustration, we finish with an example of how such a
calculation is carried out, in the particular case when $k=3$.
\end{rem}

\begin{exam} \label{exam:6.4}
Suppose that $k=3$, and we want to determine the total number of
partitions in $\ncb (n_1,n_2,n_3)$. By invoking 
Proposition~\ref{multisize}, taking into account that 
there are 4 possible partial matchings of the set of 3 orbits of 
$\gamma$, we find that 
\begin{eqnarray*}
\left|NC^{(B)}(n_1,n_2,n_3)\right|&=&
\left|\ncb (n_1)\right|\cdot\left|\ncb_{+}(n_2,n_3)\right|
+ \left|\ncb (n_2)\right|\cdot\left|\ncb_{+}(n_1,n_3)\right|\\
&{}&\!\!\!\!\!\!\!\!\!\!\!\!\!\!\!\!\!\!\!\!\!\!\!\!\!\!\!\!\!\!\!\!\!\!
+\left|\ncb (n_3)\right|\cdot\left|\ncb_{+}(n_1,n_2)\right|
+\left|\ncb (n_1)\right|\cdot\left|\ncb(n_2)\right|\cdot\left|\ncb(n_3)\right|,
\end{eqnarray*}
where $\ncb_{+}(p,q)$ denotes the subset of $\ncb(p,q)$ with positive 
connectivity. But Theorems~\ref{thm:4.4}.2 and~\ref{thm:4.4}.3 give
\begin{equation*}
\left|\ncb_{+}(p,q)\right|=\frac{pq}{p+q}{2p\choose p}{2q\choose q},
\end{equation*}
and from these results together with~(\ref{eqn:3.021}) we conclude that
\begin{equation*}
\left|NC^{(B)}(n_1,n_2,n_3)\right| =
\left(1+\frac{n_1n_2}{n_1+n_2}+\frac{n_1n_3}{n_1+n_3}+
\frac{n_2n_3}{n_3+n_3}  \right)
{2n_1\choose n_1}{2n_2\choose n_2}{2n_3\choose n_3}.
\end{equation*}
\end{exam}

$\ $

$\ $

\noindent
I.P. Goulden: University of Waterloo. 

\noindent
Address: Department of Combinatorics and Optimization, 
University of Waterloo,
Waterloo, Ontario N2L 3G1, Canada.

\noindent
Email: ipgoulden@math.uwaterloo.ca

$\ $

\noindent
Alexandru Nica, Ion Oancea: University of Waterloo.

\noindent
Address: Department of Pure Mathematics,
University of Waterloo,
Waterloo, Ontario N2L 3G1, Canada.

\noindent
Email: anica@math.uwaterloo.ca, ioancea@alumni.uwaterloo.ca

$\ $

\noindent
Current address for Ion Oancea: OANDA Canada,
370 King St. W., 2nd Floor, Box 60,
Toronto, Ontario M5V 1J9, Canada.

\noindent
Email: ioancea@oanda.com

\end{document}